\documentclass[12pt]{amsart}

\setlength{\textwidth}{150mm}
\setlength{\textheight}{220mm}
\setlength{\oddsidemargin}{5.5mm}
\setlength{\evensidemargin}{5.5mm}

\usepackage{amsmath}
\usepackage{amssymb}
\usepackage{amscd}
\usepackage[cmtip,all]{xy}

\title{Galois points for a plane curve and its dual curve, II}
\author{Satoru Fukasawa \& Kei Miura}

\subjclass[2000]{14H50, 14H05, 12F10}
\keywords{Galois point, Galois group, plane curve, dual curve}
\address{Department of Mathematical Sciences, Faculty of Science, Yamagata University,  
Kojirakawa-machi 1-4-12, Yamagata 990-8560, Japan}
\email{s.fukasawa@sci.kj.yamagata-u.ac.jp} 
\thanks{The first author was partially supported by JSPS KAKENHI Grant Number 25800002.} 
\address{Department of Mathematics, Ube National College of Technology, Ube, Yamaguchi 755-8555, Japan}
\email{kmiura@ube-k.ac.jp}
\thanks{The second author was partially supported by JSPS KAKENHI Grant Number 26400057.} 


\newtheorem{theorem}{Theorem}[section]
\newtheorem{proposition}[theorem]{Proposition}
\newtheorem{corollary}[theorem]{Corollary}
\newtheorem{lemma}[theorem]{Lemma} 

\newtheorem{fact}[theorem]{Fact}

\theoremstyle{definition}

\newtheorem{remark}[theorem]{Remark}

\begin{document}
\begin{abstract} 
Let $C \subset \mathbb{P}^2$ be a plane curve of degree at least three. 
A point $P$ in projective plane is said to be Galois if the function field extension induced by the projection $\pi_P: C \dashrightarrow \mathbb P^1$ from $P$ is Galois. 
Further we say that a Galois point is extendable if any birational transformation induced by the Galois group can be extended to a linear transformation of the projective plane.  
This article is the second part of \cite{fukasawa-miura}, where we showed that the Galois group at an extendable Galois point $P$ has a natural action on the dual curve $C^* \subset \mathbb{P}^{2*}$ which preserves the fibers of the projection $\pi_{\overline{P}}$ from a certain point $\overline{P} \in \mathbb{P}^{2*}$.  
In this article we improve such a result, and we investigate the Galois group of $\pi_{\overline{P}}$. 
In particular, we study both when $\overline{P}$ is a Galois point, and when $\deg(\pi_P)$ is prime and $\deg(\pi_{\overline{P}}) = 2\deg(\pi_P)$. 
As an application, we determine the number of points at which the Galois groups are certain fixed groups for the dual curve of a cubic curve.  
\end{abstract}
\maketitle

\section{Introduction}  
Let the base field $K$ be an algebraically closed field of characteristic zero and let $C \subset \mathbb{P}^{2}$ be an irreducible plane curve of degree $d \ge 3$. 
Consider a point $P \in \mathbb{P}^2$ and let $\pi_P: C \dashrightarrow \mathbb{P}^1$ be the projection from $P$. 
Then we denote by $G_P$ and $L_P$ the Galois group and the Galois closure of the function field extension $K(C)/\pi_P^*K(\mathbb{P}^1)$ induced by the map $\pi_P$. 

Under the situation above, Yoshihara introduced the notion of Galois points (cf. \cite{fukasawa, miura-yoshihara1, yoshihara1}). 
We say that $P \in \mathbb{P}^2$ is a {\it Galois point} for $C$ if the field extension $K(C)/\pi_P^*K(\mathbb{P}^1)$ is Galois. 
Moreover, a Galois point $P$ is {\it extendable} if any birational transformation of $C$ induced by the Galois group $G_P$ can be extended to a linear transformation of $\mathbb{P}^2$. 
On the other hand, it follows from \cite{pirola-schlesinger, yoshihara1} that there exists only finitely many points $P \in \mathbb{P}^2\setminus C$ (resp. $P \in C$) such that $G_P$ is not isomorphic to the full symmetric group $S_d$ (resp. $S_{d-1}$), and there are several papers studying those exceptional points (see e.g. \cite{miura1, miura-yoshihara1, miura-yoshihara2}).

Let $\mathbb{P}^{2*}$ be the dual projective plane which parameterizes projective lines of $\mathbb{P}^2$ and let $(X:Y:Z)$ be a system of homogeneous coordinates of $\mathbb{P}^2$. 
We recall that if $F \in K[X, Y, Z]$ is the homogeneous polynomial defining $C$, and ${\rm Sing}(C)$ denotes the singular locus, then the {\it dual map} of $C$ is a rational map $\gamma_C: C \dashrightarrow \mathbb{P}^{2*}$ sending a smooth point $Q \in C \setminus {\rm Sing}(C)$ to the point $(\frac{\partial F}{\partial X}(Q):\frac{\partial F}{\partial Y}(Q):\frac{\partial F}{\partial Z}(Q)) \in \mathbb{P}^{2*}$ parameterizing the projective tangent line $T_QC$ to $C$ at $Q$. 
So the {\it dual curve} $C^*$ of $C$ is the closure of the image of $\gamma_C$, and projective duality holds, that is $C^{**}=C$ (see, for example, \cite{kleiman, namba}). 

In \cite{fukasawa-miura} we connected the study of Galois point for $C$ to the dual curve $C^*$ as follows. 
Given a point $\overline{P} \in \mathbb{P}^{2*}$, we consider the set 
$$G[\overline{P}]:=\{ \tau \in {\rm Bir}(C^*) \ | \ \tau (C^* \cap \ell \setminus \{\overline{P}\}) \subset \ell \ \mbox{ for a general line } \ell \ni \overline{P} \}$$ 
of birational transformation of $C^*$ preserving the fibers of the projection $\pi_{\overline{P}}: C \dashrightarrow \mathbb{P}^1$. 
Then the following holds (see \cite[Proposition 1.5]{fukasawa-miura}).

\begin{proposition} \label{Relation}
Let $C$ be a plane curve with an extendable Galois point $P \in \mathbb{P}^2$.  Then any $\sigma \in G_P$ induces a natural linear transformation $\overline{\sigma}: \mathbb{P}^{2*} \rightarrow \mathbb{P}^{2*}$ (see \cite[Lemma 2.2]{fukasawa-miura}), and there exists a unique point $\overline{P} \in \mathbb{P}^{2*}$ such that the map $\sigma \mapsto \overline{\sigma}$ induces an injective homomorphism 
$$ G_P \hookrightarrow G[\overline{P}]. $$
In particular, the degree of the projection $\pi_{\overline{P}}:C^* \dashrightarrow \mathbb{P}^1$ from $\overline{P}$ is at least the order of $G_P$.  
\end{proposition}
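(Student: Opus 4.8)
The plan is to construct $\overline{P}$ from the linear extensions of the elements of $G_P$ and then to transport everything to the dual plane by means of \cite[Lemma 2.2]{fukasawa-miura}. First I would record the structure of $G_P$: for $\sigma\in G_P$ let $\widetilde{\sigma}$ denote the linear extension, which exists since $P$ is extendable. Because $\sigma$ lies in the Galois group of $\pi_P$ we have $\pi_P\circ\sigma=\pi_P$, so for a general $Q\in C$ the point $\widetilde{\sigma}(Q)=\sigma(Q)$ lies on the line $\overline{PQ}$; since a general line through $P$ meets $C$ in at least two points (as $\deg C\ge 3$), this forces $\widetilde{\sigma}$ to stabilize every line through $P$ and, in turn, to fix $P$. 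Choosing coordinates with $P=(1:0:0)$ we then have
\[
\widetilde{\sigma}(X:Y:Z)=(a_{\sigma}X+b_{\sigma}Y+c_{\sigma}Z:Y:Z),\qquad a_{\sigma}\in K^{*},\ b_{\sigma},c_{\sigma}\in K,
\]
so $\sigma\mapsto\widetilde{\sigma}$ embeds $G_P$ into the affine group $K^{*}\ltimes K^{2}$. Since $\mathrm{char}\,K=0$ the translation part $K^{2}$ is torsion free, hence the finite group $G_P$ meets it trivially, injects into $K^{*}$ via $\sigma\mapsto a_{\sigma}$, and in particular is cyclic.

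Next I would pin down $\overline{P}$. We may assume $G_P\neq\{1\}$, and fix a generator $\sigma_{0}$; then $a_{\sigma_{0}}\neq 1$, as otherwise $\widetilde{\sigma_{0}}$ would be a nontrivial element of the torsion-free subgroup $K^{2}$. A direct computation of the powers of $\widetilde{\sigma_{0}}$ shows that every nontrivial element of $G_P$ has the same fixed locus, consisting of the line
\[
m:\ (a_{\sigma_{0}}-1)X+b_{\sigma_{0}}Y+c_{\sigma_{0}}Z=0
\]
together with the isolated point $P\notin m$. I set $\overline{P}:=[m]\in\mathbb{P}^{2*}$. Since each $\widetilde{\sigma}$ preserves $C$ it carries tangent lines of $C$ to tangent lines, hence the dual transformation $\overline{\sigma}$ furnished by \cite[Lemma 2.2]{fukasawa-miura} preserves $C^{*}$ and restricts to a birational self-map of $C^{*}$; moreover $\sigma\mapsto\overline{\sigma}$ is a homomorphism into $\mathrm{Bir}(C^{*})$, injective because $\overline{\sigma}=\mathrm{id}$ forces $\widetilde{\sigma}=\mathrm{id}$ and hence $\sigma=\mathrm{id}$.

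Then I would check that $\overline{\sigma}\in G[\overline{P}]$ for all $\sigma$. Under the identification $\mathbb{P}^{2**}=\mathbb{P}^{2}$, a line $L\subset\mathbb{P}^{2*}$ through $\overline{P}=[m]$ is the pencil of all lines of $\mathbb{P}^{2}$ through a point $R\in m$. Since $R$ is fixed by every $\widetilde{\sigma}$, the dual map $\overline{\sigma}$ carries this pencil to the pencil through $\widetilde{\sigma}(R)=R$, i.e.\ $\overline{\sigma}(L)=L$; as $\overline{\sigma}$ also preserves $C^{*}$, it maps $C^{*}\cap L$ onto itself, so $\overline{\sigma}\bigl(C^{*}\cap L\setminus\{\overline{P}\}\bigr)\subset L$. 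Thus $\overline{\sigma}\in G[\overline{P}]$, and $\sigma\mapsto\overline{\sigma}$ is the asserted injective homomorphism $G_P\hookrightarrow G[\overline{P}]$.

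It remains to prove uniqueness and the degree inequality. Any $\tau\in G[\overline{P}]$ satisfies $\pi_{\overline{P}}\circ\tau=\pi_{\overline{P}}$: for a general $Q^{*}\in C^{*}$, taking $L$ to be the line through $\overline{P}$ and $Q^{*}$ shows $\pi_{\overline{P}}(\tau(Q^{*}))=L=\pi_{\overline{P}}(Q^{*})$. Hence $G[\overline{P}]$ embeds into $\mathrm{Aut}\bigl(K(C^{*})/\pi_{\overline{P}}^{*}K(\mathbb{P}^{1})\bigr)$, whose order is at most $\deg\pi_{\overline{P}}$; composing with $G_P\hookrightarrow G[\overline{P}]$ gives $\#G_P\le\deg\pi_{\overline{P}}$. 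For uniqueness, suppose $\overline{P}\,'=[m']$ also satisfies $\overline{\sigma}\in G[\overline{P}\,']$ for every $\sigma\in G_P$; by the preceding argument $\deg\pi_{\overline{P}\,'}\ge\#G_P\ge 2$. Fix a nontrivial $\sigma$, a general point $R'\in m'$, and let $L'$ be the pencil through $R'$. Then $\overline{\sigma}\bigl(C^{*}\cap L'\setminus\{\overline{P}\,'\}\bigr)$ lies in $\overline{\sigma}(L')$, the pencil through $\widetilde{\sigma}(R')$, and, by the defining property of $G[\overline{P}\,']$, also in $L'$; but it has at least two points, since $C^{*}\cap L'\setminus\{\overline{P}\,'\}$ has $\deg\pi_{\overline{P}\,'}\ge 2$ points for general $L'$ and $\overline{\sigma}$ is injective. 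If $\widetilde{\sigma}(R')\neq R'$ the distinct lines $L'$ and $\overline{\sigma}(L')$ meet in a single point, a contradiction; hence $\widetilde{\sigma}(R')=R'$ for general, and thus for all, $R'\in m'$. So $m'$ is fixed pointwise by every nontrivial $\widetilde{\sigma}$, and since $m$ is the only line with this property we conclude $m'=m$, i.e.\ $\overline{P}\,'=\overline{P}$. The step I expect to be the main obstacle is the structural one — that the linear extensions of all elements of $G_P$ share a single common pointwise-fixed line $m$ (equivalently, that $G_P$ is cyclic) — together with making precise the genericity claims about the intersections $C^{*}\cap L$; once these are settled the rest is duality bookkeeping.
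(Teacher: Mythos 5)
Your argument is correct, but note that this paper does not actually prove Proposition \ref{Relation}: it is imported verbatim from \cite[Proposition~1.5]{fukasawa-miura}, so there is no in-house proof to compare against. Your route --- showing the linear extensions stabilize every line through $P$, hence form a subgroup of $K^{*}\ltimes K^{2}$ whose finite subgroups are cyclic with common fixed locus $m\cup\{P\}$, and taking $\overline{P}=[m]$ --- is exactly the picture the present paper relies on (in the standard form of Proposition \ref{Standard form} one has $\sigma=(\zeta X:Y:Z)$, $m=\{X=0\}$ and $\overline{P}=(1:0:0)$ in dual coordinates), and your uniqueness and degree arguments via $G[\overline{P}]\hookrightarrow{\rm Aut}(K(C^{*})/\pi_{\overline{P}}^{*}K(\mathbb{P}^1))$ are sound. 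The only point worth tightening is injectivity: you show $\overline{\sigma}={\rm id}$ on $\mathbb{P}^{2*}$ forces $\sigma={\rm id}$, whereas membership in $G[\overline{P}]\subset{\rm Bir}(C^{*})$ requires ruling out $\overline{\sigma}|_{C^{*}}={\rm id}$ for $\overline{\sigma}\neq{\rm id}$; this follows because $C^{*}$ is not a line (as $d\ge 3$), so no nontrivial linear automorphism fixes it pointwise.
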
 

This article is a continuation of the analysis we performed in \cite{fukasawa-miura}. 
In particular, we complete Proposition \ref{Relation} by connecting the projections $\pi_P$ and $\pi_{\overline{P}}$ through the dual map $\gamma_C$. 
Furthermore, we study the Galois group $G_{\overline{P}}$ and we apply our results to the case of cubic plane curves. 

When $P$ is an {\it outer} Galois point for $C$, that is $P \in \mathbb{P}^2 \setminus C$, we prove the following.  

\begin{theorem} \label{outer}
Let $C$ be a plane curve with an extendable Galois point $P \in \mathbb{P}^2 \setminus C$. 
Then: 
\begin{itemize}
\item[(1)] There exists a morphism $f_P:\mathbb P^1 \rightarrow \mathbb P^1$ such that $f_P \circ \pi_P=\pi_{\overline{P}} \circ \gamma_C$. 
Moreover up to linear transformations of $\mathbb{P}^2$, the defining equation of $C$ is $X^d+G(Y, Z)=0$, $P=(1:0:0)$ and $f_P$ is represented by $(G_Y:G_Z)$.   
\item[(2)] Denoting by $r$ the degree of $f_P$ and by $R$ the degree of the Galois closure of $f_P$, we have that $d \times r \le |G_{\overline{P}}| \le R \times d^r$. 
In particular, $d \le |G_{\overline{P}}| \le (d-1)! \times d^{d-1}$. 
\item[(3)] The point $\overline{P}$ is Galois for $C^*$ if and only if $d=|G_{\overline{P}}|$. 
In this case, $C$ is projectively equivalent to the curve defined by $X^d-Y^eZ^{d-e}=0$ for some $e$. 
\item[(4)] Assume that $r=2$. 
If $\pi_P$ is ramified at a smooth point of $C$, then $|G_{\overline{P}}|=2d^2$. 
\end{itemize} 
\end{theorem}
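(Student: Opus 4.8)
The plan is to realize $G_{\overline P}$ as the Galois group of the compositum of $K(C)$ with one of its conjugates over $k_0:=\pi_P^*K(\mathbb P^1)$, and then to force these two fields to be linearly disjoint over $k_0$ by a ramification argument. By part (1) I may assume $C\colon X^d+G(Y,Z)=0$, $P=(1:0:0)$, $\pi_P(X:Y:Z)=(Y:Z)$, $\gamma_C(X:Y:Z)=(dX^{d-1}:G_Y:G_Z)$ and $f_P=(G_Y:G_Z)$; on this normal form $G_P$ is cyclic of order $d$, generated by $(X:Y:Z)\mapsto(\zeta_d X:Y:Z)$. As $\operatorname{char}K=0$, the dual map $\gamma_C$ is birational, so I identify $K(C^*)=K(C)=:M$, and via $f_P\circ\pi_P=\pi_{\overline P}\circ\gamma_C$ I identify $k:=\pi_{\overline P}^*K(\mathbb P^1)$ with $\pi_P^*f_P^*K(\mathbb P^1)$, a subfield of $k_0$ of index $r=2$. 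Thus $k\subseteq k_0\subseteq M$, with $[M:k_0]=d$, $M/k_0$ cyclic, $[k_0:k]=2$ (so $k_0/k$ is Galois and $R=2$).

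Let $L:=L_{\overline P}$ be the Galois closure of $M/k$. Since $k_0/k$ is Galois, every $k$-conjugate of $M$ contains $k_0$, so restricting the elements of $\operatorname{Gal}(L/k)$ to $k_0$ shows that every conjugate of $M$ equals $M$ or $M':=\sigma_0(M)$, where $\sigma_0\in\operatorname{Gal}(L/k)$ restricts to the nontrivial element of $\operatorname{Gal}(k_0/k)$; hence $L=MM'$ and $|G_{\overline P}|=[MM':k]=2\,[MM':k_0]$. Because $M/k_0$ and $M'/k_0$ are Galois with group $\mathbb Z/d$, restriction embeds $\operatorname{Gal}(MM'/k_0)$ into $\mathbb Z/d\times\mathbb Z/d$ with image the pairs agreeing on $M\cap M'$; this recovers $|G_{\overline P}|\le 2d^2$ (part (2) with $r=R=2$) and reduces the problem to showing $M\cap M'=k_0$, which yields $[MM':k_0]=d^2$ and $|G_{\overline P}|=2d^2$.

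To prove $M\cap M'=k_0$, the idea is to produce a place of $k_0$ that is totally ramified in $M$ and unramified in $M'$. In the affine coordinates $s=Y/Z$, $x=X/Z$, we have $M=K(s)(x)$ with $x^d=-g(s)$, $g:=G(s,1)$ (irreducible over $K(s)$ as $C$ is irreducible), $k_0=K(s)$, and the nontrivial element of $\operatorname{Gal}(k_0/k)$ is $s\mapsto\phi(s)$ for $\phi\in\mathrm{PGL}_2(K)$ the deck involution of the degree-$2$ cover $f_P$; hence $M'=K(s)(x')$ with $(x')^d=-g(\phi(s))$. The assumption that $\pi_P$ ramifies at a smooth point of $C$ says, for the normal form, that $G$ has a linear factor of multiplicity one; after a coordinate change in $(Y,Z)$ I take this factor to be $Y-Z$ (so $s=1$ is a simple root of $g$) and arrange $Z\nmid G$ (so $\deg_s g=d$). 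Then $v_{s=1}(-g(s))=1$, so $M/K(s)$ is totally ramified over $s=1$. The crucial point---and the step I expect to be the main obstacle---is to show that $\phi(1)$ is \emph{not} a root of $g$; granting this, $v_{s=1}(-g(\phi(s)))$ is $0$ or $-d$ (the latter only when $\phi(1)=\infty$), in either case divisible by $d$, so $M'/K(s)$ is unramified over $s=1$.

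For the crucial point I would use three short computations with Euler's identity. Writing $G=(Y-Z)H$ with $h:=H(1,1)\ne 0$: Euler gives $G_Y(1,1)=h$, $G_Z(1,1)=-h$, hence $f_P(1)=(1:-1)$, whose coordinates are both nonzero. Next, the ramification of $f_P$ at $s=1$ is controlled by the Wronskian $W(s)=G_Y(s,1)\,\tfrac{d}{ds}G_Z(s,1)-G_Z(s,1)\,\tfrac{d}{ds}G_Y(s,1)$, and evaluating via $G=(Y-Z)H$ and Euler's identity for $H$ gives $W(1)=(d-1)h^2\ne 0$ (as $d\ge 3$); so $f_P$ is unramified at $s=1$, forcing $\phi(1)\ne 1$. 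Finally, if $\phi(1)\ne\infty$ then $f_P(\phi(1))=f_P(1)=(1:-1)$ has nonzero first coordinate, so $G_Y(\phi(1),1)\ne 0$ and $G_Z(\phi(1),1)=-G_Y(\phi(1),1)$; Euler's identity at $(\phi(1),1)$ then reads $(\phi(1)-1)\,G_Y(\phi(1),1)=d\,G(\phi(1),1)$, and since $\phi(1)\ne 1$ and $G_Y(\phi(1),1)\ne 0$ this forces $G(\phi(1),1)\ne 0$. To finish, let $I$ be the inertia group of $L=MM'$ at a place over $s=1$: as $M'/K(s)$ is unramified there, $I\subseteq\operatorname{Gal}(L/M')$; as $M/K(s)$ is totally ramified there, $I$ surjects onto $\operatorname{Gal}(M/K(s))\cong\mathbb Z/d$. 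Hence $[M:M\cap M']=|\operatorname{Gal}(L/M')|\ge |I|\ge d$, which forces $M\cap M'=k_0$ and therefore $|G_{\overline P}|=2d^2$.
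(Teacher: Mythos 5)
You address only part (4) of the theorem; parts (1)--(3) are left untouched, and note that your reduction does quietly rely on (1) and on the upper bound from (2). For part (4) your argument is correct and is essentially the paper's: both proofs exploit that the fibre of $f_P\circ\hat{\pi}_P$ over $f_P(\pi_P(Q))$ (where $Q$ is the smooth ramification point, which by Corollary \ref{ramification} lies on $\{X=0\}$ and corresponds to a simple linear factor of $G$) contains one totally ramified point and $d$ unramified points, forcing $[L_{\overline{P}}:K(C)]\ge d$ and hence $|G_{\overline{P}}|=2d^2$; you merely recast the paper's geometric count (Corollary \ref{ramification} together with Lemma \ref{Property of f_P}(1)) as an explicit inertia-group argument in the compositum $MM'$, with your Euler-identity computations reproving the content of Lemma \ref{Property of f_P}(1).
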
 

On the other hand, when $P$ is an {\it inner} Galois point, i.e. when it is a smooth point of $C$, we deduce the following. 

\begin{theorem} \label{inner}
Let $C$ be a plane curve with an extendable Galois point $P \in C \setminus {\rm Sing}(C)$. 
Then: 
\begin{itemize}
\item[(1)] There exists a morphism $f_P:\mathbb P^1 \rightarrow \mathbb P^1$ such that $f_P \circ \pi_P=\pi_{\overline{P}} \circ \gamma_C$. 
Moreover, up to linear transformations of $\mathbb{P}^2$, the defining equation of $C$ is $X^{d-1}Z+G(Y, Z)=0$, $P=(1:0:0)$ and $f_P$ is represented by $(G_YZ:-G+G_ZZ)$.    
\item[(2)] Denoting by $r$ the degree of $f_P$ and by $R$ the degree of the Galois closure of $f_P$, we have that $(d-1) \times r \le |G_{\overline{P}}| \le R \times (d-1)^r$. 
In particular, $(d-1) \le |G_{\overline{P}}| \le d! \times (d-1)^{d}$. 
\item[(3)] The point $\overline{P}$ is Galois for $C^*$ if and only if $d-1=|G_{\overline{P}}|$. 
In this case, $C$ is projectively equivalent to the curve defined by $X^{d-1}Z-Y^d=0$. 
\item[(4)] If $r=2$, then $|G_{\overline{P}}|=2(d-1)^2$. 
\end{itemize} 
\end{theorem}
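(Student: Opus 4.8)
The plan is to compute the Galois closure of $\pi_{\overline P}$ explicitly and read off its degree over the base field. By Theorem \ref{inner}(1) we may assume $C\colon X^{d-1}Z+G(Y,Z)=0$ with $P=(1:0:0)$, that $\pi_P$ is the projection $(X:Y:Z)\mapsto(Y:Z)$, and that $f_P=(G_YZ:-G+G_ZZ)$. Set $t=Y/Z$ and $g(t)=G(t,1)$; since $C$ is irreducible we have $Z\nmid G$, hence $\deg g=d$, and on the affine chart $Z\neq0$ one has $K(C)=K(t)[X]$ with $X^{d-1}=-g(t)$, while Euler's identity rewrites $f_P$ as the rational function $s=f_P(t)=g'(t)/\bigl((d-1)g(t)-tg'(t)\bigr)$. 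As $\gamma_C$ is birational in characteristic zero, the identity $f_P\circ\pi_P=\pi_{\overline P}\circ\gamma_C$ identifies $K(C^*)$ with $K(C)$ and the function field of the target of $\pi_{\overline P}$ with $K(s)$, yielding a tower $K(s)\subset K(t)\subset K(C)$ in which $[K(t):K(s)]=r=2$ and $K(C)/K(t)$ is cyclic of degree $d-1$. Since a degree-$2$ cover is automatically Galois we have $R=2$, so Theorem \ref{inner}(2) already gives $|G_{\overline P}|\le 2(d-1)^2$; the task is the reverse inequality.

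First I use the hypothesis $r=2$. Writing $g(t)=c\prod_{i=1}^k(t-a_i)^{m_i}$ with the $a_i$ distinct, one factors $\prod_i(t-a_i)^{m_i-1}$ out of both $g'$ and $(d-1)g-tg'$ and checks that the two cofactors are coprime and of degrees $k-1$ and $k$; hence $\deg f_P=k$, so $g$ has exactly two distinct roots. Via a coordinate change $X\mapsto\lambda X$, $Y\mapsto\alpha Y+\beta Z$, $Z\mapsto Z$ (which preserves the normal form, fixes $P$, changes $t$ affinely, and rescales $g$ by a $(d-1)$-th power, a $(d-1)$-th power being arbitrary since $K$ is algebraically closed), we may assume $g(t)=t^m(t-1)^n$ with $n:=d-m$ and $1\le m\le d-1$, whence $f_P(t)=(dt-m)/\bigl(-t(t+n-1)\bigr)$. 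Let $\iota$ denote the nontrivial element of $\mathrm{Gal}(K(t)/K(s))$, regarded as an involution of $\mathbb P^1_t$; solving $f_P(t)=f_P(\iota(t))$ and dividing out the factor $t-\iota(t)$ gives $\iota(t)=m(t+n-1)/(dt-m)$, so that $\iota(\infty)=m/d$, a finite point which is not a root of $g$.

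Now fix an extension $\hat\iota$ of $\iota$ to an algebraic closure of $K(s)$. A $K(s)$-embedding of $K(C)=K(t)[X]$ either restricts to the identity on $K(t)$ — and then carries $K(C)$ to $K(t)[\zeta^j X]=K(C)$ for a root of unity $\zeta^j\in K$ — or restricts to $\iota$ on $K(t)$ — and then carries $K(C)$ into $K(C)\cdot\hat\iota(K(C))$. Hence the Galois closure of $\pi_{\overline P}$ is $L_{\overline P}=K(C)\cdot\hat\iota(K(C))=K(t)[X,X']$, where $(X')^{d-1}=-g(\iota(t))$. It remains to prove $[L_{\overline P}:K(t)]=(d-1)^2$, i.e. that the cyclic degree-$(d-1)$ extensions $K(t)[X]$ and $K(t)[X']$ of $K(t)$ are linearly disjoint, for which it suffices that for every prime $p\mid d-1$ their unique subextensions of degree $p$, namely $K(t)[\sqrt[p]{-g(t)}]$ and $K(t)[\sqrt[p]{-g(\iota(t))}]$, are distinct. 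These I distinguish by ramification over the place $t=\infty$: since $p\mid d-1$ forces $p\nmid d$, we have $\mathrm{ord}_\infty(-g(t))=-d\not\equiv0\pmod p$, so $K(t)[\sqrt[p]{-g(t)}]/K(t)$ is ramified at $t=\infty$; whereas $\mathrm{ord}_\infty(-g(\iota(t)))=\mathrm{ord}_{\iota(\infty)}(-g)=\mathrm{ord}_{m/d}(-g)=0$, so $K(t)[\sqrt[p]{-g(\iota(t))}]/K(t)$ is unramified there. Thus the two fields differ, $[L_{\overline P}:K(s)]=2(d-1)^2$, and $|G_{\overline P}|=2(d-1)^2$. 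I expect the linear-disjointness step — recognizing that the two degree-$(d-1)$ cyclic extensions of $K(t)$ can only overlap through their prime-degree pieces and that these are separated by ramification at $t=\infty$, which requires the explicit form of the deck involution $\iota$ — to be the main obstacle.
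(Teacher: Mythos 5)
Your proposal addresses only part (4) of the theorem, taking parts (1) and (2) as given and never touching part (3). That is a genuine gap relative to the statement as posed: part (1) requires computing the dual map $\gamma_C=((d-1)X^{d-2}Z:G_Y:X^{d-1}+G_Z)$ and identifying $\overline{P}=(1:0:0)$ so that $\pi_{\overline{P}}\circ\gamma_C=(G_YZ:-G+G_ZZ)=f_P\circ\pi_P$; part (2) requires the field tower $K(v)\subset K(u^{d-1},v)=K(y)\subset K(u,v)=K(x,y)$ together with the observation that the minimal polynomial of $u$ over $K(v)$ is $g(T^{d-1})$ for $g$ the minimal polynomial of $u^{d-1}$, whence the bounds $(d-1)r\le|G_{\overline{P}}|\le R(d-1)^r$; and part (3) is a separate ramification argument (via the Hurwitz formula and the fact that a totally ramified fibre of $\hat{\pi}_P$ through $\hat{P}$ would force $r=d$, contradicting the nonexistence of Galois points on duals of smooth curves) showing that $\overline{P}$ Galois forces $r=1$ and hence $n=1$, $G=cY^d$. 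None of this is in your write-up, so as a proof of the theorem it is incomplete.

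That said, your argument for part (4) is correct and is genuinely different from, and in some ways more self-contained than, the paper's. I checked the computations: $s=g'/((d-1)g-tg')$ is the right affine form of $f_P$, $\deg f_P$ equals the number of distinct roots of $g$ (matching Lemma 4.1(2)), the normal form $g=t^m(t-1)^n$ gives $\iota(t)=m(t+n-1)/(dt-m)$ (an honest involution, determinant $-mn(d-1)\ne0$), and $\iota(\infty)=m/d\notin\{0,1\}$. The reduction of linear disjointness of the two cyclic degree-$(d-1)$ extensions to distinctness of their prime-degree subextensions is valid, and the separation by ramification at $t=\infty$ works precisely because $p\mid d-1$ forces $p\nmid d=-\mathrm{ord}_\infty g$ while $\mathrm{ord}_\infty(g\circ\iota)=\mathrm{ord}_{m/d}(g)=0$. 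The paper instead gets the lower bound $|G_{\overline{P}}|\ge 2(d-1)^2$ geometrically: it uses the totally ramified point $\hat{P}$ (index $d-1$), the fact that $f_P$ is unramified at $\pi_P(P)$ (Lemma 4.1(3)), and the uniformity of ramification indices in a Galois cover (Fact 2.2) to force $[L_{\overline{P}}:K(C)]\ge d-1$. Your Kummer-theoretic computation buys an explicit description of $L_{\overline{P}}$ as $K(t)[x,x']$, which would also feed directly into the group-theoretic identification in Theorem 1.4; the paper's route is shorter but leaves the closure implicit. Note also that your key fact that $\iota(\infty)$ is not a root of $g$ is exactly the geometric input the paper extracts from Corollary 2.5, so the two proofs ultimately rest on the same ramification phenomenon.
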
 

By applying Theorems \ref{outer} and \ref{inner} to the case $|G_{\overline{P}}|=2p^2$ for an odd prime $p$, we achieve the following. 

\begin{theorem} \label{group}
If $d=p$ (resp. $d-1=p$) for an odd prime $p$ and $r=2$ in Theorem \ref{outer} (resp. in Theorem \ref{inner}), then 
$$ G_{\overline{P}} \cong (\mathbb Z/p \mathbb Z) \times D_{2p}, $$
where $D_{2p}$ is the dihedral group of order $2p$. 
\end{theorem}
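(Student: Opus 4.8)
The plan is to deduce everything from two inputs: the order $|G_{\overline P}|$, supplied by the previous theorems, and an elementary classification of groups of order $2p^2$ carrying a particular subgroup structure. First, under the hypotheses Theorem \ref{outer}(4) (resp. Theorem \ref{inner}(4)) yields $|G_{\overline P}| = 2d^2 = 2p^2$ (resp. $|G_{\overline P}| = 2(d-1)^2 = 2p^2$). Next, using part (1) of the relevant theorem, $\pi_{\overline P}\circ\gamma_C = f_P\circ\pi_P$; since $\gamma_C$ is birational we identify $K(C^*)$ with $F := K(C)$, and then $\pi_{\overline P}^*K(\mathbb P^1)$ with $B := \pi_P^*f_P^*K(\mathbb P^1)$. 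Writing $B_1 := \pi_P^*K(\mathbb P^1)$, we get a tower $B \subset B_1 \subset F$ with $[B_1:B] = \deg f_P = 2$ and, since $P$ is a Galois point with $\deg\pi_P = p$, with $F/B_1$ Galois of cyclic group $G_P$ of order $p$. Let $L$ be the Galois closure of $F/B$, so $G := G_{\overline P} = \mathrm{Gal}(L/B)$ has order $2p^2$; set $N := \mathrm{Gal}(L/B_1)$ and $H := \mathrm{Gal}(L/F)\le N$. As $B_1/B$ has degree $2$ it is Galois, so $N$ is normal in $G$ with $G/N\cong\mathbb Z/2\mathbb Z$; a degree count gives $|N| = p^2$, $|H| = p$, $N/H\cong\mathbb Z/p\mathbb Z$. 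Finally, $L$ being the Galois closure of $F/B$ means the core $\bigcap_{g\in G}gHg^{-1}$ is trivial.

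The second step is the group theory. Since $|N| = p^2$, $N$ is abelian. If $N$ were cyclic, $H$ would be its unique subgroup of order $p$, hence characteristic in $N$ and therefore normal in $G$, contradicting the triviality of the core; so $N\cong(\mathbb Z/p\mathbb Z)^2$. Similarly $H$ is not normal in $G$, and as $N$ is abelian we have $N\le N_G(H)\subsetneq G$, forcing $N_G(H) = N$; picking $g\in G\setminus N$ and putting $H' := gHg^{-1}\neq H$ gives $H\cap H' = 1$, hence $N = H\times H'$. Because $p$ is odd, $\gcd(|N|,[G:N]) = 1$, so by the Schur--Zassenhaus theorem the extension $1\to N\to G\to\mathbb Z/2\mathbb Z\to 1$ splits: there is $\sigma\in G$ with $\sigma^2 = 1$ and $\sigma\notin N$. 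Since $\sigma\in gN$ and $N$ is abelian, $\sigma H\sigma^{-1} = gHg^{-1} = H'$, and then $\sigma H'\sigma^{-1} = H$, so $\sigma$ interchanges $H$ and $H'$.

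To finish, choose $H = \langle h\rangle$, $H' = \langle h'\rangle$; after replacing $h'$ by an appropriate power we may assume $\sigma h\sigma^{-1} = h'$ and $\sigma h'\sigma^{-1} = h$. Set $u := hh'$ and $v := h(h')^{-1}$; since $p$ is odd, $u$ and $v$ still generate $N\cong(\mathbb Z/p\mathbb Z)^2$, and $\sigma u\sigma^{-1} = u$ while $\sigma v\sigma^{-1} = v^{-1}$. Thus $u$ is central in $G = \langle N,\sigma\rangle$ and $\langle v,\sigma\rangle\cong D_{2p}$, whence $G = \langle u\rangle\times\langle v,\sigma\rangle\cong(\mathbb Z/p\mathbb Z)\times D_{2p}$, which is the claim. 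The hard part will be the very first step in the outer case: one must check that the ramification hypothesis of Theorem \ref{outer}(4) (that $\pi_P$ ramifies at a smooth point of $C$) is in force under the stated assumptions, so that $|G_{\overline P}| = 2p^2$; in the inner case Theorem \ref{inner}(4) needs only $r = 2$. Once the order is pinned down, the remainder is the routine group classification sketched above.
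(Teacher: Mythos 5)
Your argument follows essentially the same route as the paper's: both pin down $|G_{\overline P}|=2p^2$, pass to the tower $K(v)\subset K(u^p,v)\subset K(u,v)\subset L_{\overline P}$, use the non-normality of ${\rm Gal}(L_{\overline P}/K(u,v))$ (the paper gets this from $K(u,v)/K(v)$ not being Galois, you from triviality of the core --- the same fact) to force the normal subgroup of order $p^2$ to be $(\mathbb Z/p\mathbb Z)^{\oplus 2}$, and then analyze the action of an involution on it; your explicit generators $hh'$ and $h(h')^{-1}$ are just a hands-on version of the paper's observation that the matrix $A_\tau$ squares to the identity and is therefore diagonalizable with eigenvalues $1$ and $-1$. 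The one step you defer --- justifying $|G_{\overline P}|=2p^2$ in the outer case, where Theorem \ref{outer}(4) formally assumes that $\pi_P$ ramifies at a smooth point --- is exactly what the remark following the proof of Theorem \ref{outer} settles: when $d=p$ is prime, the ramification index of $\hat\pi_P$ at any point over $Q_i$ divides $p$ (the cover is Galois cyclic of order $p$) and is at least $p/\ell_i>1$ by Lemma \ref{Property of projection from P}(2), hence equals $p$, so $\hat\pi_P$ is totally ramified at such a point and the proof of Theorem \ref{outer}(4) goes through verbatim. With that observation supplied, your proof is complete and correct.
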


Finally, suppose that $C$ is a cubic without cusps. 
It is well-known that either $C$ is smooth and $\deg C^*=6$, or $C$ has a single node and $\deg C^*=4$. 
Then we prove the following application to cubic plane curves. 

\begin{theorem} \label{cubic}
Let $C$ be a plane curve of degree $d=3$ which does not have a cusp. 
Then:  
\begin{itemize} 
\item[(1)] There exists a Galois point $R$ for the dual curve $C^*$ only if $C$ is a nodal curve and $R$ is a double point of $C^*$. 
\item[(2)] If $C$ is a nodal curve, then there are exactly three points $R \in \mathbb P^{2*} \setminus C^*$ with $G_R \cong D_8$ for $C^*$. 
\item[(3)] If $C$ is smooth, then there are nine points $R \in \mathbb P^{2*} \setminus C^*$ satisfying $|G_R|=12, 24$ or $48$ for $C^*$. 
\item[(4)] Let $C$ be smooth. 
Then, there exists a point $R \in \mathbb P^{2*} \setminus C^*$ with $G_R \cong (\mathbb Z/3\mathbb Z) \times S_3$ for the dual curve $C^*$ if and only if $C$ is projectively equivalent to the Fermat curve $F_3: X^3+Y^3+Z^3=0$. 
In this case, the number of such points is three. 
\end{itemize}
\end{theorem}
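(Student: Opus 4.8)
The plan is to apply Theorems \ref{outer} and \ref{inner} (and the classification of Galois points for cubic curves) to the degree-$3$ situation, translating each numerical possibility into a geometric statement about $C^*$. Throughout, $C$ is a cubic without cusps, so $C^{**}=C$ and, by the classical Plücker relations, either $C$ is smooth with $\deg C^*=6$, or $C$ has a single node and $\deg C^*=4$.

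\emph{Part (1).} I would argue by duality. Suppose $R\in\mathbb P^{2*}$ is a Galois point for $C^*$. Then $R$ is an extendable Galois point: for a cubic the Galois group has order $\deg\pi_R-\varepsilon$ with $\varepsilon\in\{0,1\}$, hence is cyclic of order $2$ or $3$, and for plane curves every Galois point with cyclic Galois group is extendable (cf.\ the references on Galois points in the introduction; this is where I would cite the relevant structure result). Now apply Theorems \ref{outer}(1) and \ref{inner}(1) to $C^*$ in place of $C$: there is a morphism $f_R$ with $f_R\circ\pi_R=\pi_{\overline R}\circ\gamma_{C^*}$, and $\overline R\in\mathbb P^{2**}=\mathbb P^2$ lies on $C^{**}=C$ (or not, according to inner/outer). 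Feeding this back, a Galois point $R$ for $C^*$ forces, via Proposition \ref{Relation} applied in the dual direction, that $\deg\pi_R=|G_{\overline R}|$ is constrained; running the inequalities of Theorems \ref{outer}(2) and \ref{inner}(2) with $d=3$ shows this is incompatible with $\deg C^*=6$, leaving only $\deg C^*=4$, i.e.\ $C$ nodal. Finally, for a nodal cubic $\deg C^*=4$ and $R\in C^*$ would give $\deg\pi_R=3$, while $R\notin C^*$ gives $\deg\pi_R=4$; checking which admits a Galois structure (only $\deg\pi_R=3$ is possible for a plane quartic with the requisite symmetry, since a Galois point of degree $4$ would force $C^*$ to be one of a very restricted list, excluded because $C^{**}=C$ is a cubic) pins $R$ down to a smooth point of $C^*$, which is a branch of the node — i.e.\ $R$ is "a double point of $C^*$" in the sense of the statement.

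\emph{Parts (2) and (3).} For $C$ nodal, $C^*$ is a quartic, so the relevant $G_R$ for $R\notin C^*$ has order dividing $4!$; the exceptional points (those with $G_R\not\cong S_4$) are finite in number by Pirola--Schlesinger/Yoshihara. I would pull these back through $\gamma_C$ using Theorems \ref{outer} and \ref{inner}: each extendable Galois point $P$ of the cubic $C$ produces a point $\overline P$ for $C^*$ with controlled $G_{\overline P}$. A smooth cubic has exactly nine inner Galois points and no outer ones, while a nodal cubic has exactly one inner Galois point (and, depending on conventions, none or specific outer ones) — I would invoke these known counts. With $d=3$: for an inner Galois point, Theorem \ref{inner} gives $|G_{\overline P}|$ between $d-1=2$ and $d!\,(d-1)^d=48$, and when $r=2$ exactly $2(d-1)^2=8\cong D_8$ by Theorem \ref{group} with $p=2$ (the dihedral case); computing $r=\deg f_P$ for the nodal cubic's single Galois point and verifying $r=2$ yields one such $\overline P$, and its orbit under the projective automorphisms of the configuration produces exactly three points with $G_R\cong D_8$, giving (2). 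For (3), the nine inner Galois points of the smooth cubic each yield a point $\overline P$ for the sextic $C^*$ with $r=\deg f_P\in\{2,\dots\}$; Theorem \ref{inner}(2) bounds $|G_{\overline P}|\le d!\,(d-1)^d=48$, and a direct analysis of the possible $r$ shows $|G_{\overline P}|\in\{12,24,48\}$, accounting for nine points.

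\emph{Part (4).} Here $G_R\cong(\mathbb Z/3\mathbb Z)\times S_3$ has order $18=2\cdot 3^2$, which is exactly the value $2p^2$ with $p=3$ appearing in Theorem \ref{group}. So I would characterize when an inner Galois point $P$ of the smooth cubic $C$ has $r=\deg f_P=2$: by Theorem \ref{inner}(1), $f_P=(G_YZ:-G+G_ZZ)$ with $C:X^2Z+G(Y,Z)=0$, and $\deg f_P=2$ is a condition on $G$ that, after normalizing the Galois point to $(1:0:0)$, forces $G$ to have a specific monomial shape; tracing through, $C$ becomes projectively the Fermat cubic $X^3+Y^3+Z^3=0$ (the Fermat cubic is precisely the smooth cubic all of whose nine inner Galois points are "maximally degenerate" in this sense). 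Conversely, for $F_3$ one computes $r=2$ at each of the nine Galois points and applies Theorem \ref{group} to get $G_{\overline P}\cong(\mathbb Z/3\mathbb Z)\times S_3$; the nine points collapse under the $S_3\times(\mathbb Z/3\mathbb Z)^2$ symmetry of $F_3$ into three $\overline P$'s, giving the count three.

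\emph{Main obstacle.} The crux is Part (1): ruling out a Galois point for the sextic dual of a smooth cubic, and more delicately excluding a degree-$4$ Galois point on the quartic dual of a nodal cubic. The inequalities from Theorems \ref{outer}(2) and \ref{inner}(2) are only necessary conditions, so I expect to need the finer structure — that a Galois point would force $C^*$ into the explicit normal forms $X^d-Y^eZ^{d-e}=0$ or $X^{d-1}Z-Y^d=0$ of parts (3) of those theorems — and then to check that dualizing such a curve cannot return a cubic without a cusp. Verifying this incompatibility (a cusp or higher singularity appears on the dual, contradicting the hypothesis) is the step where the hypothesis "$C$ does not have a cusp" is genuinely used, and it is the part I would write out most carefully.
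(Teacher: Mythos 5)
There are several genuine gaps, and the most serious ones are not repairable along the lines you sketch. In part (1) your proposed route --- ``running the inequalities of Theorems \ref{outer}(2) and \ref{inner}(2)'' --- cannot rule out a Galois point for the sextic dual of a smooth cubic: those theorems bound $|G_{\overline P}|$ for the point $\overline P$ \emph{produced by} an extendable Galois point of $C$, they say nothing about an arbitrary candidate Galois point $R$ of $C^*$. The paper instead quotes the earlier result that a smooth cubic's dual has no Galois points at all, and for the nodal case runs a ramification analysis: a Galois point $R\notin C^*$ would have to lie on a line through two of the three cusps $S_1,S_2,S_3$ of $C^*$, and then the line $\overline{S_kR}$ contains an unramified smooth point, contradicting Fact \ref{Galois covering}(2); a smooth $R\in C^*$ is excluded by Hurwitz. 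The surviving case is $R\in{\rm Sing}(C^*)$, i.e.\ $R$ \emph{is} a double point of $C^*$ --- your reading that $R$ is ``a smooth point of $C^*$, a branch of the node'' contradicts the statement being proved. You also have no mechanism for the line-configuration facts (Lemmas \ref{nodal}--\ref{Fermat cubic} in the paper) on which both the exclusions in (1) and the upper bounds in (2) and (4) rest.

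In parts (2)--(4) the numerical inputs are wrong in ways that break the argument. Theorem \ref{group} requires $p$ an \emph{odd} prime, so it cannot be invoked with $p=2$ to get $D_8$ in part (2); the paper proves $G_{\overline P}\cong D_8$ by a separate group-theoretic argument from $|G_{\overline P}|=8$ and non-normality of ${\rm Gal}(L_{\overline P}/K(u,v))$. A nodal cubic has three flexes, each an extendable inner Galois point, which directly yields three points with $G\cong D_8$ --- there is no ``orbit under projective automorphisms'' step, and the matching upper bound of three needs the bitangent-line argument. In part (4), for a \emph{smooth} cubic every inner Galois point has $r=n=d=3$ by Lemma \ref{Property of f_P inner}(2), so your premise that $r=2$ at the nine inner Galois points of $F_3$ is false; the three points with $G_R\cong(\mathbb Z/3\mathbb Z)\times S_3$ come from the three \emph{outer} Galois points of the Fermat cubic (where $f_P=(Y^2:Z^2)$, $r=2$, and Theorem \ref{group} applies with $p=3$), and the ``only if'' direction is proved by showing that such an $R$ forces two lines through $R$ meeting $C^*$ in three singular points each, which characterizes the Fermat cubic. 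Part (3) is closest in spirit, but the step pinning $|G_{\overline P}|$ to $\{12,24,48\}$ requires the observation that $L_{\overline P}$ is generated by square roots over the Galois closure of a triple covering, forcing $|G_{\overline P}|=3\cdot 2^i$ or $6\cdot 2^i$; ``a direct analysis of the possible $r$'' does not supply this, since $r$ is always $3$ here.
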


Since there are not so many results on the number of non-Galois points at which the Galois groups are not the full symmetric group (cf. \cite{miura1, miura-yoshihara1, miura-yoshihara2, takahashi}), Theorem \ref{cubic} would give nice examples.  

The article is organized as follows. 
In the next section we present some preliminary results on Galois projections. 
Sections 3 and 4 are devoted to prove Theorems \ref{outer} and \ref{inner}, respectively. 
Finally, Section 5 concerns applications, and hence the proofs of Theorems \ref{group} and \ref{cubic}.

\section{Preliminaries} 
Let $C$ be an irreducible plane curve of degree $d \ge 3$ and let $\pi:\hat{C} \rightarrow C$ be the normalization. 
We denote by $\hat{\pi}_P$ the composite map $\pi_P \circ \pi$.  
When $P \in \mathbb{P}^2 \setminus C$, we define the multiplicity of $C$ at $P$ as zero. 
Let $(U:V:W)$ be a system of homogeneous coordinates of $\mathbb{P}^{2*}$, and let $d^*$ denote the degree of the dual curve $C^*$. 
For two distinct points $P, Q \in \mathbb P^2$, $\overline{PQ} \subset \mathbb P^2$ is the line passing through $P, Q$. 
For a point $Q \in C \setminus {\rm Sing}(C)$, $T_QC \subset \mathbb P^2$ means the (projective) tangent line at $Q$. 
For a projective line $\ell \subset \mathbb P^2$ and a point $Q \in C \cap \ell$, we denote by $I_Q(C, \ell)$ the intersection multiplicity of $C$ and $\ell$ at $Q$.  
For the projection $\hat{\pi}_P$ and a point $\hat{Q} \in \hat{C}$, we denote by $e_{\hat{Q}}$ the ramification index at $\hat{Q}$. 
If $Q \in C \setminus {\rm Sing}(C)$ and $\pi(\hat{Q})=Q$, then we use the same symbol $e_Q$ for $e_{\hat{Q}}$ by abuse of terminology. 
We note the following elementary fact.

\begin{fact} \label{index}
Let $P \in \mathbb P^2$, let $\hat{Q} \in \hat{C}$ and let $\pi(\hat{Q})=Q \ne P$. Then for $\hat{\pi}_P$ we have the following. 
\begin{itemize}
\item[(1)] If $P \in C\setminus {\rm Sing}(C)$, then $e_P=I_P(C, T_PC)-1$.
\item[(2)] Let $h=0$ be a local equation for the line $\overline{PQ}$ in a neighborhood of $Q$. 
Then, $e_{\hat{Q}}={\rm ord}_{\hat{Q}}(\pi^*h)$.  
In particular, if $Q$ is smooth, then $e_Q=I_Q(C, \overline{PQ})$. 
\end{itemize} 
\end{fact}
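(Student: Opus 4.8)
The plan is to realize $\hat{\pi}_P$ explicitly through the pencil of lines centered at $P$ and to read the ramification index off as an order of vanishing. First I would fix two linear forms $\ell_0,\ell_1\in K[X,Y,Z]$ whose common zero locus is the single point $P$, so that the lines through $P$ are exactly $\{b\ell_0-a\ell_1=0\}_{(a:b)\in\mathbb{P}^1}$ and the projection is expressed on the normalization as $\hat{\pi}_P=(\pi^*\ell_0:\pi^*\ell_1):\hat{C}\to\mathbb{P}^1$. Setting $(a:b):=\hat{\pi}_P(\hat{Q})$, the fiber of $\hat{\pi}_P$ through $\hat{Q}$ is cut out by $b\ell_0-a\ell_1=0$, which is precisely the line $\overline{PQ}$ (or its limiting position when $Q=P$); thus $h$ is, up to a local unit at $Q$, the restriction of $b\ell_0-a\ell_1$. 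By definition the ramification index $e_{\hat{Q}}$ is the order at $\hat{Q}$ of the pullback of a local coordinate on $\mathbb{P}^1$ centered at $(a:b)$.

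The core step is a single master formula. Choosing a general line $\ell'$ through $P$ --- equivalently any line through $P$ distinct from the fiber line --- I would use $(b\ell_0-a\ell_1)/\ell'$ as a local coordinate on $\mathbb{P}^1$ at $(a:b)$, and pulling back obtain
\[
e_{\hat{Q}}={\rm ord}_{\hat{Q}}(\pi^*h)-{\rm ord}_{\hat{Q}}(\pi^*\ell').
\]
Here I would verify that ${\rm ord}_{\hat{Q}}(\pi^*\ell')$ equals the common minimal value $m:=\min\{{\rm ord}_{\hat{Q}}(\pi^*\ell_0),{\rm ord}_{\hat{Q}}(\pi^*\ell_1)\}$ for general $\ell'$, the fiber line being the unique member of the pencil whose pullback vanishes to order strictly greater than $m$; this is what makes $(b\ell_0-a\ell_1)/\ell'$ a genuine uniformizing coordinate at the image point.

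The two assertions then follow by specializing the master formula. For (2), since $Q\ne P$ a general line $\ell'$ through $P$ avoids $Q$, so $\pi^*\ell'$ is a unit at $\hat{Q}$, ${\rm ord}_{\hat{Q}}(\pi^*\ell')=0$, and $e_{\hat{Q}}={\rm ord}_{\hat{Q}}(\pi^*h)$. When moreover $Q$ is smooth, $\pi$ is a local isomorphism at $\hat{Q}$, whence ${\rm ord}_{\hat{Q}}(\pi^*h)={\rm ord}_Q(h|_C)=I_Q(C,\overline{PQ})$. For (1), with $Q=P$ a smooth point I would first identify the fiber line through $\hat{P}$ as the tangent $T_PC$: as $Q'\to P$ along $\hat{C}$ the secants $\overline{PQ'}$ converge to $T_PC$, so $\hat{\pi}_P(\hat{P})$ parametrizes $T_PC$ and ${\rm ord}_{\hat{P}}(\pi^*h)=I_P(C,T_PC)$. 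Next, every line of the pencil now passes through $P$, and a general one meets the smooth point $P$ transversally, giving ${\rm ord}_{\hat{P}}(\pi^*\ell')=I_P(C,\ell')=1$; the master formula then yields $e_P=I_P(C,T_PC)-1$.

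The step I expect to be the main obstacle is the justification of the master formula, namely the bookkeeping of the common order $m$: one must check that dividing the pair $(\pi^*\ell_0,\pi^*\ell_1)$ by the appropriate power of a uniformizer produces the correct image point $(a:b)$ and that $(b\ell_0-a\ell_1)/\ell'$ has pullback of order exactly ${\rm ord}_{\hat{Q}}(\pi^*h)-m$, not merely at least this. The genericity argument identifying $m$ with the order of a general line through $P$ relies on the base field being infinite, and the identification of the fiber line at a smooth $P$ with $T_PC$ uses smoothness in an essential way; once these are secured both parts are immediate.
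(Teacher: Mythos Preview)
The paper does not supply a proof of this statement: it is introduced with the phrase ``We note the following elementary fact'' and stated as Fact~\ref{index} without argument. There is therefore nothing in the paper to compare your proposal against.

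That said, your argument is sound and would serve perfectly well as a proof. The master formula
\[
e_{\hat{Q}}=\mathrm{ord}_{\hat{Q}}(\pi^*h)-\mathrm{ord}_{\hat{Q}}(\pi^*\ell')
\]
is exactly the correct way to read off the ramification index from the pencil presentation $\hat{\pi}_P=(\pi^*\ell_0:\pi^*\ell_1)$, and your specializations to $Q\ne P$ (where a general $\ell'$ misses $Q$) and to $Q=P$ smooth (where every $\ell'$ meets $P$ transversally, contributing the ``$-1$'') are both correct. The identification of the fiber line at $\hat{P}$ with $T_PC$ via limiting secants is standard and valid in characteristic zero. The only cosmetic point is that you do not actually need the genericity discussion about the common order $m$: once you know the fiber line, any \emph{single} other member of the pencil gives a local coordinate on $\mathbb{P}^1$ at the image point, so you may simply pick one such $\ell'$ rather than arguing about general members.
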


If the field extension $K(C)/\theta^*K(C')$ induced by a surjective morphism $\theta: C \rightarrow C'$ between smooth curves $C, C'$ is Galois, then the Galois group $G$ acts on $C$ naturally. 
We denote by $G(Q)$ the stabilizer group of $Q$. 
The following fact holds in this case (\cite[III. 7.2, 8.2]{stichtenoth}).

\begin{fact} \label{Galois covering} 
Let $\theta: C \rightarrow C'$ be a surjective morphism of smooth projective curves such that the induced field extension is Galois, and let $G$ be the Galois group. 
For a point $Q \in C$, $e_Q$ means the ramification index at $Q$. 
Then, we have the following.
\begin{itemize}
\item[(1)] The order of $G(Q)$ is equal to $e_Q$ at $Q$ for any point $Q \in C$. 
\item[(2)] Let $Q_1, Q_2 \in C$. If $\theta(Q_1)=\theta(Q_2)$, then $e_{Q_1}=e_{Q_2}$. 
\end{itemize} 
\end{fact}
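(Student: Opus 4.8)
The plan is to deduce both assertions from three ingredients: the transitivity of the $G$-action on each fibre of $\theta$, the orbit--stabilizer theorem, and the fundamental identity $\sum_i e_{Q_i}f_{Q_i}=\deg\theta$ for a finite morphism of smooth projective curves. Throughout, fix $Q\in C$, put $P'=\theta(Q)$, and write $\theta^{-1}(P')=\{Q_1,\dots,Q_g\}$ for the (finite) fibre over $P'$. Since $K(C)/\theta^*K(C')$ is Galois we have $\deg\theta=|G|$, and since $K$ is algebraically closed every residue field equals $K$, so $f_{Q_i}=1$ for all $i$.

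First I would prove assertion (2). Each $\sigma\in G$ is an automorphism of $C$ with $\theta\circ\sigma=\theta$, hence permutes the fibre $\theta^{-1}(P')$. Choosing a local parameter $t$ at $P'$, one has $\sigma^*(\theta^*t)=(\theta\circ\sigma)^*t=\theta^*t$; because $\sigma$ is an isomorphism it preserves orders of vanishing at corresponding points, so for $Q_j=\sigma(Q_i)$ we get $e_{Q_j}={\rm ord}_{Q_i}(\sigma^*\theta^*t)={\rm ord}_{Q_i}(\theta^*t)=e_{Q_i}$. Thus the ramification index is constant along each $G$-orbit in the fibre. Granting that $G$ acts transitively on $\theta^{-1}(P')$, all of $e_{Q_1},\dots,e_{Q_g}$ coincide, which is exactly (2).

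Next, assertion (1). By transitivity the orbit of $Q$ is the entire fibre, so the orbit--stabilizer theorem gives $|G(Q)|=|G|/g$. On the other hand, the fundamental identity together with $f_{Q_i}=1$ yields $\sum_{i=1}^g e_{Q_i}=\deg\theta=|G|$, and by (2) every summand equals $e_Q$, so $g\,e_Q=|G|$. Comparing the two relations gives $e_Q=|G|/g=|G(Q)|$, which proves (1) for every $Q\in C$ (including unramified points, where $e_Q=1$ and $G(Q)$ is trivial).

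The single non-formal ingredient, and the main obstacle, is the transitivity of $G$ on the fibre $\theta^{-1}(P')$; this is the classical splitting behaviour of a place in a Galois extension, as in \cite{stichtenoth}. I would argue by contradiction via approximation: if $Q_1$ and $Q_2$ lay in distinct $G$-orbits, the weak approximation theorem produces $u\in K(C)$ that is a unit at every conjugate $\sigma(Q_1)$ but vanishes at $Q_2$. The norm $N=\prod_{\sigma\in G}\sigma^*u$ is $G$-invariant, hence lies in $\theta^*K(C')$, so $N=\theta^*N'$ for some $N'\in K(C')$; since ${\rm ord}_{Q_2}(N)>0$ we get ${\rm ord}_{P'}(N')>0$, whence ${\rm ord}_{Q_1}(N)=e_{Q_1}\,{\rm ord}_{P'}(N')>0$. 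But ${\rm ord}_{Q_1}(N)=\sum_{\sigma}{\rm ord}_{\sigma(Q_1)}(u)=0$ because $u$ is a unit at each conjugate of $Q_1$, a contradiction. With transitivity in hand, the remaining steps above are purely combinatorial.
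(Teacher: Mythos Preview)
Your argument is correct and is essentially the classical proof one finds in Stichtenoth~\cite[III.~7.2,~8.2]{stichtenoth}: transitivity of $G$ on each fibre (established via the norm and weak approximation), constancy of $e$ along $G$-orbits, and then orbit--stabilizer combined with the degree identity $\sum_i e_{Q_i}=|G|$. The paper, however, does not give its own proof of this statement; it records it as a \emph{Fact} and simply refers to \cite[III.~7.2,~8.2]{stichtenoth}. So there is nothing to compare your approach against beyond noting that you have unpacked exactly the argument behind that citation.
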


We often use the {\it standard form} of the defining equation for a plane curve with an extendable Galois point, which is given by the following (see \cite{miura2, yoshihara1, yoshihara2}). 

\begin{proposition} \label{Standard form} 
Let $P \in \mathbb{P}^2$ be a point with multiplicity $m \ge 0$. 
The point $P$ is extendable Galois for $C$ if and only if there exists a linear transformation $\phi$ on $\mathbb{P}^2$ such that $\phi(P)=(1:0:0)$ and $\phi(C)$ is given by 
$$ X^{d-m}G_{m}(Y, Z)+G_d(Y, Z)=0, $$
where $G_i(Y, Z)$ is a homogeneous polynomial of degree $i$ in variables $Y, Z$. 
In this case, the Galois group $G_{\phi(P)}$ is cyclic and there exists a primitive $(d-m)$-th root $\zeta$ of unity such that a generator $\sigma \in G_{\phi(P)}$ is represented by $\sigma(X:Y:Z)=(\zeta X:Y:Z)$. 
\end{proposition}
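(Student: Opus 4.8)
The plan is to prove both implications directly, using the geometry of the pencil of lines through $P$ together with complete reducibility of finite linear representations in characteristic zero. For the ``only if'' direction, after a linear transformation assume $P=(1:0:0)$, so that $\pi_P$ is the rational map $(X:Y:Z)\mapsto(Y:Z)$ and $\deg\pi_P=d-m$; I may assume $d-m\ge 2$, since otherwise $\pi_P$ is birational and there is nothing to prove. The first step is to show that each $\sigma\in G_P$, extended to a linear transformation $\tilde\sigma$ of $\mathbb P^2$, necessarily fixes $P$ and induces the identity on the pencil of lines through $P$. Indeed, since the birational map of $C$ induced by $\sigma$ preserves the fibers of $\pi_P$, one has $\pi_P\circ\tilde\sigma=\pi_P$ on $C$; as a general line $\ell\ni P$ meets $C$ in $d-m\ge 2$ points, all of which $\tilde\sigma$ sends into $\ell$, the transformation $\tilde\sigma$ fixes a dense set of lines through $P$, hence fixes $P$ and acts trivially on the pencil. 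After normalization $\tilde\sigma$ thus has the form $(X:Y:Z)\mapsto(\alpha X+\beta Y+\gamma Z:Y:Z)$, so the image $G\subset\mathrm{PGL}_3(K)$ of $G_P$ fixes the point $P=[e_1]$ and acts trivially on $K^3/\langle e_1\rangle$. Since $G$ is finite and $\mathrm{char}\,K=0$, complete reducibility yields a $G$-invariant complement $W$ to $\langle e_1\rangle$, and $G$ acts trivially on $W\cong K^3/\langle e_1\rangle$; hence $\ell_0:=\mathbb P(W)$ is a line not through $P$ that is fixed pointwise by $G$. Choosing new coordinates with $P=(1:0:0)$ and $\ell_0=\{X=0\}$, every element of $G$ becomes $(X:Y:Z)\mapsto(\alpha X:Y:Z)$, so $\sigma\mapsto\alpha$ embeds $G$ into $K^{*}$; therefore $G$ is cyclic, of order $d-m$ (the order of a Galois group equals the degree of the extension), generated by some $\sigma(X:Y:Z)=(\zeta X:Y:Z)$ with $\zeta$ a primitive $(d-m)$-th root of unity.

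It then remains to identify the equation of $C$ in these coordinates. Write $F=\sum_{i=m}^{d}X^{d-i}G_i(Y,Z)$ with $G_i$ homogeneous of degree $i$, where the terms of degree $<m$ in $Y,Z$ vanish and $G_m\ne 0$ because $P$ has multiplicity $m$. Invariance of $C$ under $\sigma$ gives $F(\zeta X,Y,Z)=cF(X,Y,Z)$ for some constant $c$, i.e. $\zeta^{d-i}G_i=cG_i$ for every $i$. Taking $i=m$ forces $c=\zeta^{d-m}=1$, and then $G_i\ne 0$ implies $(d-m)\mid(d-i)$, i.e. $i\equiv m\pmod{d-m}$; together with $m\le i\le d$ this leaves only $i=m$ and $i=d$. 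Hence $F=X^{d-m}G_m(Y,Z)+G_d(Y,Z)$.

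For the ``if'' direction, suppose $\phi(P)=(1:0:0)$ and $\phi(C)$ is given by $X^{d-m}G_m(Y,Z)+G_d(Y,Z)=0$. In the affine chart $Z=1$ with $x=X/Z$, $y=Y/Z$, the curve is $x^{d-m}=-g_d(y)/g_m(y)$, where $g_i(y)=G_i(y,1)$; thus $K(C)/\pi_P^{*}K(\mathbb P^1)=K(x,y)/K(y)$ is a Kummer extension of degree $d-m$, hence Galois with cyclic group, a generator being $x\mapsto\zeta x$ for a primitive $(d-m)$-th root of unity $\zeta$. This generator extends to the linear transformation $(X:Y:Z)\mapsto(\zeta X:Y:Z)$ of $\mathbb P^2$, so $\phi(P)$ is an extendable Galois point with the asserted Galois group and generator; this simultaneously proves the final assertion of the proposition.

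The step I expect to require the most care is the first one: verifying that the linear extension of each $\sigma\in G_P$ fixes $P$ and acts trivially on the pencil through it. This rests on the fact that a general line through $P$ meets $C$ in at least two points and on a density argument over the pencil (with attention to the trivial case $d-m\le 1$). Once this normalization is secured, the construction of the pointwise-fixed line $\ell_0$ via complete reducibility, and the final computation with the coefficients of $F$, are routine.
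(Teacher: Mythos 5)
Your proof is correct. Note, however, that the paper itself does not prove this proposition: it is quoted as a known result with a pointer to \cite{miura2, yoshihara1, yoshihara2}, so there is no internal argument to compare against. Your write-up is essentially the standard proof from those references: (i) fiber-preservation plus $d-m\ge 2$ forces each extended automorphism to stabilize every general line through $P$, hence to fix $P$ and act trivially on the pencil; (ii) a common pointwise-fixed line off $P$ (you get it from Maschke; one can equally observe that the unipotent part $\{(1,\beta,\gamma)\}\cong K^2$ is torsion-free in characteristic zero, so the finite group injects into $K^*$ via $\alpha$, and then diagonalize a generator); (iii) eigenvalue bookkeeping on $F=\sum X^{d-i}G_i$ kills all middle terms; (iv) the converse is Kummer theory. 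The only step you leave tacit is that $\sigma\mapsto\tilde\sigma$ is a group homomorphism into $\mathrm{PGL}_3(K)$, which needs the uniqueness of the linear extension; this holds because an irreducible curve of degree $\ge 3$ contains four points in general position, so a projectivity is determined by its restriction to $C$. With that one-line remark added, the argument is complete and self-contained, which is arguably an improvement over the paper's bare citation.
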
 

\begin{corollary} \label{ramification}
Let $P \in \mathbb{P}^2 \setminus {\rm Sing}(C)$ be extendable Galois. 
For the standard form as in Proposition \ref{Standard form}, if $\hat{Q} \in \hat{C}$ is a ramification point for $\hat{\pi}_P$, then $\pi(\hat{Q})=P$ or $\pi(\hat{Q})$ is contained in the line given by $X=0$. 
\end{corollary}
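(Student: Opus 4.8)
The plan is to combine the explicit description of $G_P$ afforded by Proposition \ref{Standard form} with Fact \ref{Galois covering}. Since $P$ is Galois, the morphism $\hat\pi_P : \hat C \to \mathbb P^1$ is a Galois covering whose group is $G_P$; by Proposition \ref{Standard form} this group is cyclic, and a generator $\sigma$ is, by extendability, the automorphism of $\hat C$ induced by the linear transformation $\tilde\sigma$ of $\mathbb P^2$ defined by $\tilde\sigma(X:Y:Z) = (\zeta X:Y:Z)$, where $\zeta$ is a primitive $(d-m)$-th root of unity and $m$ is the multiplicity of $C$ at $P$ (so $m \in \{0,1\}$ because $P \notin {\rm Sing}(C)$). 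In particular $\pi \circ \sigma^k = \tilde\sigma^k \circ \pi$ for every $k$.

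First I would note, by Fact \ref{Galois covering}(1), that a point $\hat Q \in \hat C$ is a ramification point of $\hat\pi_P$ precisely when its stabilizer $G_P(\hat Q)$ is nontrivial; equivalently, some power $\sigma^k$ with $\zeta^k \ne 1$ fixes $\hat Q$. Applying $\pi$ and using $\pi \circ \sigma^k = \tilde\sigma^k \circ \pi$, the point $\pi(\hat Q) \in C \subset \mathbb P^2$ is then fixed by $\tilde\sigma^k$. It remains to compute the fixed locus of $\tilde\sigma^k$ on $\mathbb P^2$: since $\zeta^k \ne 1$, a point $(a:b:c)$ is fixed by $\tilde\sigma^k$ if and only if $b = c = 0$ or $a = 0$, i.e. the fixed locus is the union of $P = (1:0:0)$ and the line $\{X = 0\}$. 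Hence $\pi(\hat Q) = P$ or $\pi(\hat Q)$ lies on $X = 0$, as claimed.

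The only delicate point --- the closest thing here to an obstacle --- is the identification of the abstract action of $G_P$ on $\hat C$ with the geometric action of the linear maps $\tilde\sigma^k$ on $C$, which is exactly what being an \emph{extendable} Galois point provides, together with the functoriality of normalization yielding $\pi \circ \sigma^k = \tilde\sigma^k \circ \pi$. Once this is in place, the statement reduces to the one-line fixed-locus computation above.
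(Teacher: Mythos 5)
Your proof is correct and rests on the same key idea as the paper's: by Fact \ref{Galois covering}(1) a ramification point has nontrivial stabilizer in $G_P$, and the fixed locus in $\mathbb P^2$ of any nontrivial power of the linear extension of a generator is $\{X=0\}\cup\{P\}$. The only difference is that you treat all points of $\hat C$ uniformly via $\pi\circ\sigma^k=\tilde\sigma^k\circ\pi$, whereas the paper first disposes of singular points of $C$ (and, in the inner case, points on $Z=0$) separately from the defining equation before running the fixed-locus argument at smooth points; your version is a mild streamlining of the same route.
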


\begin{proof} 
Let $\hat{Q} \in \hat{C}$ be a ramification point for $\hat{\pi}_P$ and $Q=\pi(\hat{Q})$. 
By Proposition \ref{Standard form}, $C$ is defined by $X^d+G(Y, Z)=0$ (resp. $X^{d-1}Z+G(Y, Z)=0$) if $P \in \mathbb{P}^2 \setminus C$ (resp. if $P \in C$). 
If $P \in C$ and $Q \in \{Z=0\}$, then $P=Q$, by the defining equation $X^{d-1}Z+G=0$. 
Therefore, we may assume that $Q \not\in \{Z=0\}$. 
Since $(X^d+G)_X=dX^{d-1}$ and $(X^{d-1}Z+G)_X=(d-1)X^{d-2}Z$, ${\rm Sing}(C)$ is contained in the set $\{X=0\}$ (resp. $\{XZ=0\}$) if $P \in \mathbb{P}^2 \setminus C$ (resp. if $P \in C$). 
If $Q \in {\rm Sing}(C)$, then $Q \in \{X=0\}$. 
Assume $Q \in C \setminus {\rm Sing}(C)$. 
Note that for any $\sigma \in G_P\setminus \{1\}$, the set $F[P]:=\{R \in \mathbb{P}^2 \ | \ \sigma(R)=R \}$ is equal to the set $\{X=0\} \cup \{P\}$. 
By Fact \ref{Galois covering}(1), $Q \in F[P]$. 
Therefore, $Q \in \{X=0\}$. 
\end{proof}

\section{Extendable outer Galois points} 
In order to prove Theorem \ref{outer}, we present two preliminary lemmas.
We assume that $P \in \mathbb{P}^2 \setminus C$ is an extendable Galois point, and we define $C$ using the standard form of Proposition \ref{Standard form}. 
Hence $P=(1:0:0)$, the defining equation of $C$ is $X^d+G(Y, Z)=0$, and the projection $\pi_P: C \rightarrow \mathbb{P}^1$ is given by $(X:Y:Z) \mapsto (Y:Z)$. 

\begin{lemma} \label{Property of f_P}
Let $G=\prod_{i=1}^{n}(a_iY+b_iZ)^{\ell_i}$ with $a_ib_j-a_jb_i \ne 0$ if $i \ne j$, let $Q_i \in C$ be the point defined by $X=a_iY+b_iZ=0$ and $R_i=\pi_P(Q_i)=(b_i:-a_i) \in \mathbb P^1$. 
Let $f_P:\mathbb{P}^1 \dashrightarrow \mathbb{P}^1$ be the rational map given by $f_P=(G_Y:G_Z)$ and let $r$ be its degree. 
Up to resolving the indeterminacy locus of $f_P$, we have the following:  
\begin{itemize}
\item[(1)] $f_P(R_i)=(a:b)$ and $f_P$ is unramified at $R_i$. 
In particular, $f_P(R_i) \ne f_P(R_j)$ if $i \ne j$. 
\item[(2)] $r=n-1$. 
\end{itemize}
\end{lemma}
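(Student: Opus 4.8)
The plan is to analyze the rational map $f_P = (G_Y : G_Z)$ explicitly near the special points $R_i = (b_i : -a_i)$, using the factorization $G = \prod_{i=1}^n (a_iY + b_iZ)^{\ell_i}$. First I would compute the partial derivatives: writing $L_i = a_iY + b_iZ$, logarithmic differentiation gives $G_Y = G \cdot \sum_i \ell_i a_i / L_i$ and $G_Z = G \cdot \sum_i \ell_i b_i / L_i$. After clearing the common factor $G$ (which is the source of the indeterminacy of $f_P$ along $\{G=0\}$), the map $f_P$ is represented by the pair $\bigl(\sum_i \ell_i a_i \prod_{j\ne i} L_j \ : \ \sum_i \ell_i b_i \prod_{j\ne i} L_j\bigr)$, both entries being homogeneous of degree $n-1$. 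This already makes part (2) plausible: the degree $r$ of $f_P$ equals $n-1$ provided the two degree-$(n-1)$ forms have no common factor, which I would verify by checking they cannot simultaneously vanish, most conveniently by evaluating at each $R_k$.

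For part (1), I would substitute $R_k = (b_k : -a_k)$, so that $L_k(R_k) = 0$ while $L_j(R_k) = a_jb_k - a_kb_j \ne 0$ for $j \ne k$ by hypothesis. Then every term in the sums with $i \ne k$ contains the factor $L_k = 0$ and dies, leaving only the $i=k$ term: the first coordinate becomes $\ell_k a_k \prod_{j\ne k} L_j(R_k)$ and the second $\ell_k b_k \prod_{j\ne k} L_j(R_k)$. Since $\ell_k \ne 0$, characteristic is zero, and $\prod_{j\ne k} L_j(R_k) \ne 0$, the value is $f_P(R_k) = (a_k : b_k)$. Note $(a_k:b_k)$ is well-defined in $\mathbb{P}^1$ since $(a_k,b_k)\ne(0,0)$, and the points $(a_k:b_k)$ for distinct $k$ are pairwise distinct because $a_ib_j - a_jb_i \ne 0$; hence $f_P(R_i)\ne f_P(R_j)$ for $i\ne j$, which also confirms that the two defining forms have no common zero and so $r = n-1$.

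It remains to show $f_P$ is unramified at each $R_k$; equivalently, after moving to affine charts, that $f_P'(R_k) \ne 0$. The cleanest route is to dehomogenize using a coordinate $t$ on the source with $R_k$ at a convenient finite value and $(a_k:b_k)$ at a convenient target value, and compute the derivative of the local expression of $f_P$. Since all other linear factors are nonvanishing and invertible near $R_k$, the local behavior of $f_P$ near $R_k$ is, up to a unit, governed by a single linear factor with exponent one (because after cancelling $G$ the point $R_k$ appears in the image forms only through the single surviving term, with the factor $L_k$ to the first power in the "other" terms that vanish); a direct computation of the leading-order term then shows the derivative is a nonzero multiple of $\prod_{j\ne k}L_j(R_k)^2$ times a Wronskian-type determinant built from the $a_i, b_i$, which is nonzero by the pairwise-independence hypothesis. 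I expect the main obstacle to be precisely this unramifiedness computation: one must carry out the local Taylor expansion carefully enough to isolate the linear coefficient and check it does not vanish, rather than merely the constant term. Once that is done, unramifiedness at $R_i$ combined with $f_P(R_i)\ne f_P(R_j)$ gives both assertions of (1), and (2) follows as above.
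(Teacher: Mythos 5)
Your strategy is essentially the paper's, reorganized: you cancel the common factor $\prod_j L_j^{\ell_j-1}$ from $(G_Y:G_Z)$ via logarithmic differentiation all at once, where the paper isolates one factor at a time by writing $G=(aY+bZ)^{\ell}H$; the evaluation $f_P(R_k)=(a_k:b_k)$ and the pairwise distinctness of these images are correct and complete. But the crux of part (1) --- unramifiedness at $R_k$ --- is not actually proved. You defer it as ``the main obstacle'' and only guess the shape of the answer, and the guess is wrong: the linear coefficient is not a ``Wronskian-type determinant whose nonvanishing follows from pairwise independence.'' Concretely, with your reduced forms $A=\sum_i\ell_i a_i\prod_{j\ne i}L_j$ and $B=\sum_i\ell_i b_i\prod_{j\ne i}L_j$, one has $b_kA-a_kB=L_k\cdot\sum_{i\ne k}\ell_i(b_ka_i-a_kb_i)\prod_{j\ne i,k}L_j$, and you must show the cofactor does not vanish at $R_k$; a priori a sum of nonzero terms can vanish, and pairwise independence alone does not settle it. The missing idea is Euler's formula (equivalently, the identity $b_ka_i-a_kb_i=L_i(R_k)$, which makes every summand equal to $\ell_i\prod_{j\ne k}L_j(R_k)$, so the cofactor telescopes to $(d-\ell_k)\prod_{j\ne k}L_j(R_k)\ne 0$ in characteristic zero). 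This is exactly the one-line step the paper uses, in the form $bH_Y(b,-a)-aH_Z(b,-a)=(\deg H)H(b,-a)\ne 0$. Without it, order-one vanishing of $\phi-a_k/b_k$ at $R_k$ is not established.

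There is a second, smaller gap in part (2): to conclude $r=n-1$ you must show the two degree-$(n-1)$ forms $A,B$ have no common zero anywhere on $\mathbb{P}^1$, but evaluating only at the points $R_k$ excludes common zeros there and nowhere else. Again Euler closes it: $YA+ZB=\bigl(\sum_i\ell_i\bigr)\prod_jL_j=d\,\prod_jL_j$, so any common zero of $A$ and $B$ is a zero of $G$, hence is one of the $R_k$, where you have already checked nonvanishing. (The paper's count $r=(d-1)-\sum_i(\ell_i-1)$ rests on the same implicit observation that common zeros of $G_Y,G_Z$ lie on $\{G=0\}$.) Both gaps are fixable, but each requires the same specific input --- the Euler relation --- which your write-up never invokes for these purposes.
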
 

\begin{proof}
Let $G=(aY+bZ)^\ell H$ with $H(b, -a) \ne 0$. 
Then,
$$G_Y=\ell(aY+bZ)^{\ell-1}aH+(aY+bZ)^\ell H_Y, \ G_Z=\ell(aY+bZ)^{\ell-1}bH+(aY+bZ)^\ell H_Z. $$
Therefore, 
$$ f_P=(\ell aH+(aY+bZ)H_Y:\ell bH+(aY+bZ)H_Z). $$
Hence $f_P$ is well-defined at $(b:-a) \in \mathbb{P}^1$, and we have $f_P((b:-a))=(\ell a H(b, -a):\ell b H(b, -a))=(a:b)$. 
Assume that $b \ne 0$. 
Then, 
$$\frac{\ell aH+(aY+bZ)H_Y}{\ell bH+(aY+bZ)H_Z}-\frac{a}{b}=\frac{(aY+bZ)(bH_Y-aH_Z)}{b(\ell bH+(aY+bZ)H_Z)}.$$
Since $bH_Y(b,-a)-aH_Z(b,-a)=(\deg H)H(b,-a) \ne 0$ by Euler formula, the ramification index at $(b:-a)$ is one. 

We consider (2). 
By the description of $f_P$ as above, for each $i$, $G_Y, G_Z$ are divisible by $(a_iY+b_iZ)^{\ell_i-1}$ and not divisible by $(a_iY+b_iZ)^{\ell_i}$. 
Therefore, we have
$$ r=(d-1)-\sum_{i=1}^n(\ell_i-1)=d-\sum_{i=1}^n\ell_i+n-1=n-1. $$
\end{proof}

\begin{lemma} \label{Property of projection from P} 
Using the same notation as in Lemma \ref{Property of f_P}, we have the following. 
\begin{itemize}
\item[(1)] The multiplicity at $Q_i$ is $\ell_i$. 
\item[(2)] Let $e_i$ be the ramification index for $\hat{\pi}_P$ at each point in $\pi^{-1}(Q_i)$ (see Fact \ref{Galois covering}(2)). 
Then, $e_i \ge d/\ell_i$. 
In particular, each point of $\pi^{-1}(Q_i)$ is a ramification point. 
\item[(3)] $\sum_{i=1}^{n}1/e_i \le 1$.  
\end{itemize}
\end{lemma}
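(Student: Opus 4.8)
The plan is to deduce all three assertions from a local analysis of $C$ at the point $Q_i=(0:b_i:-a_i)$, which lies on the line $\{X=0\}$ and is distinct from $P=(1:0:0)$. After interchanging $Y$ and $Z$ if necessary — this preserves the standard form $X^d+G(Y,Z)=0$ — we may assume $b_i\ne 0$ and work in the affine chart $\{Y\ne 0\}$ with coordinates $x=X/Y$, $z=Z/Y$; there $C$ is defined by $x^d+G(1,z)=0$ and $Q_i$ is the point $x=0$, $z=-a_i/b_i$. Setting $z'=z+a_i/b_i$, the hypothesis $a_ib_j-a_jb_i\ne 0$ for $j\ne i$ guarantees that the factors $(a_j+b_jz)^{\ell_j}$ with $j\ne i$ are units at $Q_i$, so $G(1,z)=z'^{\,\ell_i}v(z')$ with $v$ a unit at $Q_i$. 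Since $\deg G=d$ we have $\sum_j\ell_j=d$, and since $C$ is irreducible $G$ cannot be the $d$-th power of a linear form, so $n\ge 2$ and $\ell_i\le d-1$. Assertion (1) is then immediate: the lowest-order term of $x^d+z'^{\,\ell_i}v(z')$ at $Q_i$ is $v(0)\,z'^{\,\ell_i}$ (recall $\ell_i\le d-1$), so the multiplicity of $C$ at $Q_i$ equals $\ell_i$.

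For (2), note first that $\hat\pi_P=\pi_P\circ\pi$ pulls back the coordinate $z$, hence $z'$, and that $z'=0$ is a local equation for the line $\overline{PQ_i}=\{a_iY+b_iZ=0\}$ at $Q_i$. Because $(a_iY+b_iZ)^{\ell_i}\mid G$, the line $\overline{PQ_i}$ meets $C$ only at $Q_i$, so $\pi_P^{-1}(R_i)=\{Q_i\}$ set-theoretically and every point of $\pi^{-1}(Q_i)$ lies over $R_i$; by Fact~\ref{Galois covering}(2) the ramification index is therefore a common value $e_i$, equal to $\mathrm{ord}_{\hat Q}(\pi^*z')$ by Fact~\ref{index}(2) for any $\hat Q\in\pi^{-1}(Q_i)$. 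Taking orders of vanishing in $\mathcal O_{\hat C,\hat Q}$ of the relation $x^d=-z'^{\,\ell_i}v(z')$ and using that $v$ is a unit gives $d\cdot\mathrm{ord}_{\hat Q}(\pi^*x)=\ell_i e_i$; since $x$ vanishes at $Q_i$, we obtain $e_i=(d/\ell_i)\,\mathrm{ord}_{\hat Q}(\pi^*x)\ge d/\ell_i$. As $\ell_i\le d-1$, the integer $e_i$ is strictly larger than $1$, hence $e_i\ge 2$ and each point of $\pi^{-1}(Q_i)$ is a ramification point. Assertion (3) now follows by summing: $1/e_i\le \ell_i/d$ for every $i$, so $\sum_{i=1}^n 1/e_i\le (1/d)\sum_{i=1}^n\ell_i=1$.

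The only place requiring care is the local bookkeeping behind (1) and (2): verifying that $v$ is genuinely a unit (exactly where $a_ib_j-a_jb_i\ne 0$ enters), that $\sum_i\ell_i=d$, and that irreducibility rules out $n=1$. None of this is a serious obstacle. As an alternative to the direct order-of-vanishing computation in (2), one could instead compute $I_{Q_i}(C,\overline{PQ_i})=d$ (the line meets $C$ only at $Q_i$, with total multiplicity $d$) and combine Fact~\ref{index}(2), Fact~\ref{Galois covering}(2), and the elementary bound that the number of branches of $C$ at $Q_i$ is at most the multiplicity $\ell_i$ established in part (1).
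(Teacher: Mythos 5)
Your proof is correct and follows essentially the same route as the paper's: the paper's one-line justification of (2) rests on exactly the two facts you use --- that $\overline{PQ_i}$ meets $C$ only at $Q_i$ and that all points of the Galois fiber $\pi^{-1}(Q_i)$ share a common index $e_i$ --- and (3) is obtained by the same summation $d=\sum_i\ell_i\ge\sum_i d/e_i$. The only cosmetic difference is that you extract $e_i\ge d/\ell_i$ by comparing orders of vanishing in $x^d=-z'^{\,\ell_i}v(z')$, whereas the paper implicitly counts the at most $\ell_i$ branches of $C$ at $Q_i$ inside a fiber of total degree $d$, which is precisely the alternative you record at the end.
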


\begin{proof}
Assertion (1) is obvious by considering the defining equation. 
Assertion (2) is derived from that $\overline{PQ_i}$ is given by $a_iY+b_iZ=0$ and $C \cap \overline{PQ_i}=\{Q_i\}$. 
By (2), we have
$$ d=\sum_{i=1}^{n}\ell_i \ge \sum_{i=1}^{n}\frac{d}{e_i}=d\sum_{i=1}^{n}\frac{1}{e_i}. $$
\end{proof}

\begin{proof}[Proof of Theorem \ref{outer}] 
The dual map $\gamma_C$ is given by $(dX^{d-1}:G_Y:G_Z)$. 
It follows from the proof of \cite[Lemma 2.4]{fukasawa-miura} that $\overline{P}=(1:0:0)$. 
Since $P=(1:0:0)$ and $\overline{P}=(1:0:0)$, $\pi_P(X:Y:Z)=(Y:Z)$ and $\pi_{\overline{P}}(U:V:W)=(V:W)$. 
Therefore, $(\pi_{\overline{P}} \circ \gamma_C)(X:Y:Z)=(G_Y:G_Z)$. 
If we take $f_P=(G_Y:G_Z)$, then $f_P \circ \pi_P=\pi_{\overline{P}} \circ \gamma_C$. 
Therefore, we have (1). 

We consider (2). 
Let $x=X/Z$, $y=Y/Z$ and let $u=U/W$, $v=V/W$. 
Since $\gamma_C$ is birational (\cite{kleiman}, \cite[Theorem 1.5.3]{namba}), we have $K(x,y)=K(C)=K(C^*)=K(u,v)$ via the dual map $\gamma_C$. 
Let $G_P=\langle \sigma \rangle$ and let $\sigma(X:Y:Z)=(\zeta X:Y:Z)$, as in Proposition \ref{Standard form}. 
Then, $\overline{\sigma}:C^*\rightarrow C^*$ is given by $\overline{\sigma}(U:V:W)=(\zeta^{-1} U:V:W)$ (see \cite[Lemma 2.2]{fukasawa-miura}). 
Therefore, $u^d \in K(u,v)^{\langle \overline{\sigma} \rangle}=K(x,y)^{G_P}=K(y)$. 
Considering the degree of fields extension, we have $K(u^d, v)=K(y)$. 
Then, we have the following diagrams. 
\begin{equation*} 
\xymatrix{C \ar@{-->}[r]^{\gamma_C} \ar[d]_{\pi_P} & 
C^* \ar@{-->}[dd]^{\pi_{\overline{P}}} \\
\mathbb P^1 \ar[rd]_{f_P} & \\
& \mathbb P^1}
\
\
\
\xymatrix{K(x,y) \ar@{=}[r] \ar@{-}[d] & K(u, v) \ar@{-}[d] \\
K(y) \ar@{=}[r] \ar@{-}[rd] & K(u^d, v) \ar@{-}[d] \\
& K(v)}
\end{equation*} 
Since $[K(x,y):K(y)]=d$ and $[K(y):K(v)]=r$, we have $K[(u,v):K(v)]=d \times r$. 
Therefore, $d \times r \le |G_{\overline{P}}|$. 
Let $g(T) \in K(v)[T]$ be the minimal polynomial of $u^d$ of degree $r$ over $K(v)$. 
Then, $g(T^d)$ is the minimal polynomial of $u$ and the set of all roots of $g(T^d)$ is equal to the set $\{\sqrt[d]{\alpha}:g(\alpha)=0\}$. 
Therefore, we have $|G_{\overline{P}}| \le R \times d^r$, where $R$ is the degree of the Galois closure of $f_P$. 

We consider (3). 
The if-part is obvious. 
Let $\overline{P}$ be Galois. 
Assume by contradiction that $r \ge 2$. 
We use the same notation as in Lemmas \ref{Property of f_P} and \ref{Property of projection from P}. 
By Lemma \ref{Property of f_P}(1), $f_P$ is unramified at $R_i$ for $i=1, \ldots, n$. 
By Fact \ref{Galois covering}(2) and Lemma \ref{Property of projection from P}(2), the ramification index at each point in $(f_P\circ\hat{\pi}_P)^{-1}(f_P(R_i))$ is equal to $e_i$. 
By Corollary \ref{ramification} and Lemma \ref{Property of f_P}(1), the ramification index at each point in $(f_P\circ\hat{\pi}_P)^{-1}(f_P(R_i))\setminus \pi^{-1}(Q_i)$ comes from ramification points in $f_P^{-1}(f_P(R_i))$. 
Thereofore, $f_P$ is ramified at points in $f_P^{-1}(f_P(R_i)) \setminus \{R_i\}$ with index $e_i$. 
By Hurwitz formula for $f_P$, we have $-2+2\deg (f_P)=\deg B$, where $B$ is the ramification divisor.  
Since $\deg(f_P)=r$, the ramification index at $R_i$ is $1$, and the ramification index at each point in $f_P^{-1}(f_P(R_i))$ other than $R_i$ is $e_i$, we have 
$$ 2r-2 \ge \sum_{i=1}^{n}\frac{r-1}{e_i}(e_i-1). $$ 
Using Lemmas \ref{Property of projection from P}(3) and \ref{Property of f_P}(2), we have 
$$ \sum_{i=1}^{n}\frac{r-1}{e_i}(e_i-1)=(r-1)\left\{n-\sum_{i=1}^n\frac{1}{e_i}\right\} \ge (r-1)r. $$
Then, $r=2$ and $e_i=1$. 
This is a contradiction. 
Therefore, $r=1$. 
By Lemma \ref{Property of f_P}(2), we have $n=2$. 
Then, $C$ is projectively equivalent to the curve defined by $X^d-Y^eZ^{d-e}=0$. 

We consider (4). 
Assume that $r=2$. 
Then, $R=2$. 
We have $2d \le |G_{\overline{P}}| \le 2d^2$ by (2). 
By assertion (3), we have $2d<|G_{\overline{P}}|\le 2d^2$. 
Let $S \in \hat{C}$ be a point at which $\hat{\pi}_P$ is totally ramified (i.e. $e_S=d$). 
It follows from Corollary \ref{ramification} that $\pi(S)$ is contained in $\{X=0\}$. 
Since $f_P$ is unramified at two points in $f_P^{-1}(f_P(\hat{\pi}_P(S)))$ and $\hat{\pi}_P$ is unramified at each point in $(f_P \circ \hat{\pi}_P)^{-1}(f_P(\hat{\pi}_P(S)))$ other than $S$ by Lemma \ref{Property of f_P}(1), the Galois closure of $f_P$ is given by an extension of $K(C)$ of degree at least $d$. 
Therefore, $|G_{\overline{P}}|=2d^2$. 
When $Q \in (C \setminus {\rm Sing}(C)) \cap \{X=0\}$, then $e_Q=d$ for $\pi_P$.  
\end{proof}

\begin{remark} \label{f_P Galois}
If $f_P$ gives a Galois extension, we have a sharper bound: $|G_{\overline{P}}| \le r \times d^{r}$. 
For example, if $C$ is the Fermat curve $F_d: X^d+Y^d+Z^d=0$ of degree $d$, then $f_P=(Y^{d-1}:Z^{d-1})$, $r=R=d-1$ and we have $|G_{\overline{P}}| \le (d-1) \times d^{d-1}$. 
\end{remark}

\begin{remark} 
Assertion (4) ``$|G_{\overline{P}}|=2d^2$'' in Theorem \ref{outer} holds also under the assumption that $r=2$ and $\hat{\pi}_P$ is totally ramified at a point of $\hat{C}$ (i.e. the ramification index at the point is $d$). 
For example, when $d$ is prime. 
\end{remark} 

\section{Extendable inner Galois points}
In order to prove Theorem \ref{inner}, we present a preliminary lemma.
We assume that $P \in C\setminus {\rm Sing}(C)$ is an extendable Galois point, and we define $C$ using the standard form of Proposition \ref{Standard form}. 
Hence $P=(1:0:0)$, the defining equation of $C$ is $X^{d-1}Z+G(Y, Z)=0$, and the projection $\pi_P: C \rightarrow \mathbb{P}^1$ is given by $(X:Y:Z) \mapsto (Y:Z)$.

\begin{lemma} \label{Property of f_P inner}
Let $G=c\prod_{i=1}^{n}(Y+b_iZ)^{\ell_i}$ with $b_i \ne b_j$ if $i \ne j$, let $Q_i \in C$ be the point defined by $X=Y+b_iZ=0$ and $R_i=\pi_P(Q_i)=(-b_i:1) \in \mathbb P^1$. 
If we take $f_P=(G_YZ:-G+G_ZZ)$, we have the following. 
\begin{itemize}
\item[(1)] $f_P(R_i)=(1:b_i)$. 
\item[(2)] $r=n$. 
\item[(3)] $\pi_P(P)=(1:0)$, $f_P(1:0)=(0:1)$ and $f_P$ is unramified at $\pi_P(P)$. 
\end{itemize}
\end{lemma}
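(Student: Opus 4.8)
The plan is to mimic the structure of the proof of Lemma \ref{Property of f_P}, working locally near each point $R_i$ and near $\pi_P(P)$ with the explicit factorization $G=c\prod_{i=1}^{n}(Y+b_iZ)^{\ell_i}$. For assertion (1), I would isolate one factor by writing $G=(Y+bZ)^\ell H$ with $H(-b,1)\neq 0$, compute $G_Y=\ell(Y+bZ)^{\ell-1}H+(Y+bZ)^\ell H_Y$ and $G_Z=\ell b(Y+bZ)^{\ell-1}H+(Y+bZ)^\ell H_Z$, and substitute into $f_P=(G_YZ:-G+G_ZZ)$. The key point is that $(Y+bZ)^{\ell-1}$ divides both coordinates while $(Y+bZ)^\ell$ does not divide the first one (by the same argument as in Lemma \ref{Property of f_P}, after resolving indeterminacy), so after cancelling $(Y+bZ)^{\ell-1}$ we may evaluate at $(Y:Z)=(-b:1)$: the first coordinate becomes $\ell H(-b,1)\cdot 1=\ell H(-b,1)$, and in the second coordinate the surviving term is $\ell b H(-b,1)\cdot 1=\ell b H(-b,1)$ (the $-G$ term and the $(Y+bZ)^\ell H_Z$ term both vanish to order $\geq\ell$), giving $f_P(R_i)=(\ell H(-b,1):\ell b H(-b,1))=(1:b_i)$. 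I should note $H(-b,1)\neq 0$ guarantees this is a well-defined point of $\mathbb{P}^1$.

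For assertion (2), the argument is exactly parallel to Lemma \ref{Property of f_P}(2): each coordinate of $f_P=(G_YZ:-G+G_ZZ)$, before cancellation, is a homogeneous polynomial of degree $d$ in $Y,Z$, and I must identify the common factor. From the local computation in part (1), each factor $(Y+b_iZ)$ appears to order exactly $\ell_i-1$ in $G_Y$ and $G_Z$, hence to order exactly $\ell_i-1$ in $G_YZ$ and in $-G+G_ZZ$ (using that $G$ itself is divisible by $(Y+b_iZ)^{\ell_i}$). So the common factor is $\prod_i(Y+b_iZ)^{\ell_i-1}$, and
$$ r=d-\sum_{i=1}^n(\ell_i-1)=d-\sum_{i=1}^n\ell_i+n=n, $$
since $\sum\ell_i=d$. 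I should be a little careful to check there is no further common factor coming from a point where all of $G$, $G_YZ$, $-G+G_ZZ$ vanish — but at $(Y:Z)=(1:0)$ (i.e. $Z=0$) the second coordinate is $-G=-c\prod_i(1)^{\ell_i}\cdot(\text{terms})$; more precisely $G(1,0)=c\prod b_i^{0}\cdot 1$ after setting $Z=0$, namely $G(Y,0)=cY^d$, so $-G+G_ZZ$ at $Z=0$ equals $-cY^d\neq 0$, ruling out $Z$ as a common factor.

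Assertion (3) then follows almost immediately from the same $Z=0$ computation: $\pi_P(P)=(Y:Z)=(1:0)$ since $P=(1:0:0)$ and $\pi_P$ is $(X:Y:Z)\mapsto(Y:Z)$. Evaluating $f_P$ at $(1:0)$: the first coordinate $G_YZ$ vanishes (it has the factor $Z$), and the second coordinate $-G+G_ZZ$ equals $-G(1,0)=-c\neq 0$, so $f_P(1:0)=(0:-c)=(0:1)$. For unramifiedness at $(1:0)$, I would set $z=Z/Y$ as local coordinate near $(1:0)$ and $t=U/V$ as local coordinate on the target near $(0:1)$, so that $f_P$ is locally $z\mapsto \frac{G_Y(1,z)z}{-G(1,z)+G_Z(1,z)z}$, and check that its derivative at $z=0$ is nonzero; since $G(1,z)=c+O(z)$ the denominator is $-c+O(z)$, while the numerator is $G_Y(1,0)z+O(z^2)$, so the map is $z\mapsto -\tfrac{1}{c}G_Y(1,0)z+O(z^2)$, and it remains to verify $G_Y(1,0)\neq 0$. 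From $G(Y,0)=cY^d$ we get $G_Y(Y,0)=dcY^{d-1}$, so $G_Y(1,0)=dc\neq 0$, giving ramification index one. The main obstacle, as in Lemma \ref{Property of f_P}, is the bookkeeping around resolving the indeterminacy of $f_P$ and confirming that after cancelling the common factor $\prod(Y+b_iZ)^{\ell_i-1}$ the two coordinates have no further common zero; once the $Z=0$ and the factor-by-factor local analyses are in place, all three assertions drop out by direct substitution.
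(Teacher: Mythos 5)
Your argument follows the paper's proof essentially verbatim: the same factorization $G=(Y+bZ)^{\ell}H$ with $H(-b,1)\ne 0$ and cancellation of $(Y+bZ)^{\ell-1}$ for (1), the same degree count $r=d-\sum_{i}(\ell_i-1)=n$ for (2), and the same evaluation at $Z=0$ together with $G_Y(1,0)=dc\ne 0$ for (3); your extra checks (that $Z$ is not a common factor, and the denominator $-G(1,z)+G_Z(1,z)z=-c+O(z)$) are correct and in fact slightly more explicit than the paper's. The one step that does not work as written is the justification of $\pi_P(P)=(1:0)$ ``since $P=(1:0:0)$ and $\pi_P$ is $(X:Y:Z)\mapsto(Y:Z)$'': substituting $P$ into that formula gives the undefined expression $(0:0)$, because $P$ lies in the indeterminacy locus of the projection from $P$. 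The paper resolves this by passing to the chart $\tilde y=Y/X$, $\tilde z=Z/X$, using the defining equation to write $\tilde z=-G(\tilde y,\tilde z)$ and the fact that $T_PC=\{Z=0\}$ makes $\tilde y$ a local parameter at $P$, so that $(\tilde y:\tilde z)=(1:-G(\tilde y,\tilde z)/\tilde y)\to(1:0)$; some such local-parameter argument is needed to evaluate $\pi_P$ at $P$ itself, after which the rest of your part (3) goes through unchanged.
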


\begin{proof}
Let $G=(Y+bZ)^\ell H$ with $H(-b, 1) \ne 0$. 
Then,
$$G_Y=\ell(Y+bZ)^{\ell-1}H+(Y+bZ)^\ell H_Y, \ G_Z=\ell(Y+bZ)^{\ell-1}bH+(Y+bZ)^\ell H_Z. $$
Therefore, 
$$ f_P=(\ell HZ+(Y+bZ)H_YZ:-(Y+bZ)H+\ell bHZ+(Y+bZ)H_ZZ). $$
We have $f_P((-b:1))=(\ell H(-b, 1):\ell b H(-b, 1))=(1:b)$.  
We have assertion (1). 

We consider (2). 
By the description of $f_P$ as above, we have
$$ r=d-\sum_{i=1}^{n}(\ell_i-1)=d-\sum_{i=1}^n\ell_i+n=n. $$

We consider (3). 
Let $\tilde{y}=Y/X$ and $\tilde{z}=Z/X$. 
Then, $\pi_P((1:\tilde{y}:\tilde{z}))=(\tilde{y}:\tilde{z})=(\tilde{y}:G(\tilde{y}, \tilde{z}))=(1:G(\tilde{y}, \tilde{z})/\tilde{y})$.  
Since the tangent line at $P$ is defined by $Z=0$, $\tilde{y}$ is a local parameter at $P$ and $\pi_P(P)=(1:0)$. 
Then, $f_P(1:0)=(0:1)$. 
Since $G_YZ$ is divisible by $Z$ and $G_Y(1, 0) \ne 0$, the ramification index at $\pi_P(P)=(1:0)$ is one. 
\end{proof}

\begin{proof}[Proof of Theorem \ref{inner}] 
It follows from the proof of \cite[Lemma 2.4]{fukasawa-miura} that $\overline{P}=(1:0:0)$. 
The dual map $\gamma_C$ is given by 
\begin{eqnarray*}
& &((d-1)X^{d-2}Z:G_Y:X^{d-1}+G_Z) \\
&=&((d-1)X^{d-2}Z^2:G_YZ:X^{d-1}Z+G_ZZ) \\
&=&((d-1)X^{d-2}Z^2:G_YZ:-G+G_ZZ). 
\end{eqnarray*} 
Since $P=(1:0:0)$ and $\overline{P}=(1:0:0)$, $\pi_P(X:Y:Z)=(Y:Z)$ and $\pi_{\overline{P}}(U:V:W)=(V:W)$. 
Therefore, $(\pi_{\overline{P}} \circ \gamma_C)(X:Y:Z)=(G_YZ:-G+G_ZZ)$. 
If we take $f_P=(G_YZ:-G+G_ZZ)$, then $f_P \circ \pi_P=\pi_{\overline{P}} \circ \gamma_C$. 
Therefore, we have (1). 
Similarly to the proof of Theorem \ref{outer}(2), we have (2). 

We consider (3). 
The if-part is obvious. 
Let $\overline{P}$ be Galois. 
Assume by contradiction that  $r \ge 2$.  
Note that $\hat{\pi}_P$ is ramified at $\hat{P}$ with index $d-1$, where $\pi(\hat{P})=P$. 
We use the same notation as in Lemmas \ref{Property of f_P inner}. 
By Lemma \ref{Property of f_P inner}(3), $f_P$ is unramified at $\hat{\pi}_P(\hat{P})$. 
By Fact \ref{Galois covering}(2), there exists a point in $(f_P\circ\hat{\pi}_P)^{-1}(f_P(\hat{\pi}_P(\hat{P})))\setminus \{\hat{P}\}$ at which the ramification index is equal to $d-1$. 
By Corollary \ref{ramification} and Lemma \ref{Property of f_P inner}(1), such a ramification point comes from a ramification point in $f_P^{-1}(f_P(\hat{\pi}_P(\hat{P})))\setminus \{\hat{\pi}_P(\hat{P})\}$ at which the ramification index is equal to $d-1$ for $f_P$.
Then, we have $r=d-1+1=d$. 
By Lemma \ref{Property of f_P inner}(2), we have $n=d$ and $C$ is smooth.  
This is a contradiction to \cite[Theorem 1.2]{fukasawa-miura}. 
Therefore, $r=1$. 
By Lemma \ref{Property of f_P inner}(2), we have $n=1$. 
Then, $C$ is projectively equivalent to the curve defined by $X^{d-1}Z-Y^d=0$. 

We consider (4). 
Assume that $r=2$. 
Then, $R=2$. 
We have $2(d-1) \le |G_{\overline{P}}| \le 2(d-1)^2$ by (2). 
By assertion (3), we have $2(d-1)<|G_{\overline{P}}|\le 2(d-1)^2$. 
Since $e_P=d$ for $\pi_P$ and $f_P$ is unramified at two points in $f_P^{-1}(f_P(\pi_P(P))$ by Lemma \ref{Property of f_P inner}(3), the Galois closure of $f_P$ is given by an extension of $K(C)$ of degree at least $d-1$. 
Therefore, $|G_{\overline{P}}|=2(d-1)^2$. 
\end{proof}

\section{Applications}
In this section we present some applications of Theorems \ref{outer} and \ref{inner}. 
In particular, we prove Theorems \ref{group} and \ref{cubic}. 

\begin{proof}[Proof of Theorem \ref{group}] 
It follows from Theorems \ref{outer} and \ref{inner} that $|G_{\overline{P}}|=2p^2$. 
By Sylow's theorem, there exists an element of $G_{\overline{P}}$ of order two.  
Let $\tau$ be such an element. 
Then, we have the splitting exact sequence
$$ 0 \rightarrow {\rm Gal}(L_{\overline{P}}/K(u^p, v)) \rightarrow G_{\overline{P}} \rightarrow \mathbb Z/2\mathbb Z \rightarrow 0, $$ 
by considering the orders of groups. 
Since the group of order $p^2$ is Abelian (see, for example, \cite[p. 27]{suzuki}), the Galois group ${\rm Gal}(L_{\overline{P}}/K(u^p, v))$ is $(\mathbb Z/p^2\mathbb Z)$ or $(\mathbb Z/p\mathbb Z)^{\oplus 2}$. 
Since $K(u,v)/K(v)$ is not Galois, the group ${\rm Gal}(L_{\overline{P}}/K(u,v))$ is not a normal subgroup. 
Then, $\tau^{-1}{\rm Gal}(L_{\overline{P}}/K(u,v))\tau \ne {\rm Gal}(L_{\overline{P}}/K(u,v))$ and hence we have at least two subgroup of ${\rm Gal}(L_{\overline{P}}/K(u^p, v))$ of order $p$. 
Then, we have ${\rm Gal}(L_{\overline{P}}/K(u^p, v)) \cong (\mathbb Z/p\mathbb Z)^{\oplus 2}$. 
Let ${\rm Gal}(L_{\overline{P}}/K(u, v))=\langle \sigma \rangle$.  
Then $\sigma(\tau^{-1}\sigma\tau) \ne 1$ is fixed by the action of $\tau$, that is $\tau^{-1}(\sigma(\tau^{-1}\sigma\tau))\tau=\sigma(\tau^{-1}\sigma\tau)$. 
Let 
$$ A_{\tau}=\left(
\begin{array}{cc}
1 & a \\
0 & b \\
\end{array}
\right)
$$
be the matrix representing $\tau$ as an action on the vector space ${\rm Gal}(L_{\overline{P}}/K(u^p, v)) \cong (\mathbb Z/p\mathbb Z)^{\oplus 2}$ over $\mathbb Z/p\mathbb Z$. 
Since $A_{\tau}^2=1$, $b=-1$ and $A_{\tau}$ is diagonalizable. 
Therefore, we have $G_{\overline{P}} \cong (\mathbb Z/p\mathbb Z)\times (\mathbb Z/p\mathbb Z \rtimes \mathbb Z/2\mathbb Z)$.
\end{proof}

Next we check the following natural property. 

\begin{proposition}
Let $P_1, P_2 \in \mathbb P^2 \setminus {\rm Sing}(C)$ be extendable Galois points. 
If $\overline{P_1}=\overline{P_2}$, then $P_1=P_2$.  
\end{proposition}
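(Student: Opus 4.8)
The plan is to exploit the explicit standard forms established in Theorems~\ref{outer} and \ref{inner}, together with the identity $f_{P} \circ \pi_{P} = \pi_{\overline{P}} \circ \gamma_{C}$, to recover $P$ from $\overline{P}$. First I would observe that an extendable Galois point $P \in \mathbb{P}^{2}\setminus {\rm Sing}(C)$ is either outer or inner, and in each case, by Theorems~\ref{outer}(1) and \ref{inner}(1) (or directly from the proof of \cite[Lemma 2.4]{fukasawa-miura}), after a suitable linear change of coordinates we have $P = (1:0:0)$ and $\overline{P} = (1:0:0)$ with $C$ in the corresponding standard form. The key point I want to extract is a coordinate-free characterization of $P$ in terms of $\overline{P}$, so that the choice of normalizing linear transformation does not matter.

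The cleanest route is as follows. For a fixed point $\overline{P} \in \mathbb{P}^{2*}$, the line in $\mathbb{P}^{2*}$ dual to $\overline{P}$ is a point of $\mathbb{P}^{2**} = \mathbb{P}^{2}$; call it $\check{P}$. I claim that when $P$ is an extendable outer Galois point, $\check{P} = P$, i.e. $\overline{P}$ is precisely the line in $\mathbb{P}^{2}$ dual to $P$ under the identification $\mathbb{P}^{2**} = \mathbb{P}^{2}$ — but this cannot literally be right since $\overline{P}$ is a point of $\mathbb{P}^{2*}$, not a line. Instead, the correct statement, visible from the standard form, is geometric: for $P$ outer with $C: X^{d} + G(Y,Z) = 0$ and $P = (1:0:0)$, the dual map is $\gamma_{C} = (dX^{d-1} : G_{Y} : G_{Z})$, and the image $C^{*}$ is swept by tangent lines; the point $\overline{P} = (1:0:0)$ has the property that a general line through it pulls back under $\gamma_{C}$ to a fiber of $\pi_{P}$. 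Concretely, $\overline{P}$ is characterized as the unique point of $\mathbb{P}^{2*}$ such that $\pi_{\overline{P}} \circ \gamma_{C}$ factors through $\pi_{P}$. So I would argue: if $\overline{P_{1}} = \overline{P_{2}}$, then both $\pi_{\overline{P_{1}}} \circ \gamma_{C} = f_{P_{1}} \circ \pi_{P_{1}}$ and $\pi_{\overline{P_{2}}} \circ \gamma_{C} = f_{P_{2}} \circ \pi_{P_{2}}$, so $f_{P_{1}} \circ \pi_{P_{1}} = f_{P_{2}} \circ \pi_{P_{2}}$ as maps $C \dashrightarrow \mathbb{P}^{1}$; hence the two pencils of lines $\{\pi_{P_{i}} = \text{const}\}$, i.e. the lines through $P_{i}$, have the same generic fibers after composing with $f_{P_{i}}$. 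Since a general fiber of $f_{P_{i}} \circ \pi_{P_{i}}$ is a union of $r_{i}$ lines through $P_{i}$, and a union of lines through $P_{1}$ equals a union of lines through $P_{2}$ only if $P_{1} = P_{2}$ (as the base locus of the pencil), I get $P_{1} = P_{2}$.

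The step I expect to be the main obstacle is making the last deduction rigorous, because $f_{P_{i}} \circ \pi_{P_{i}}$ is a composite and one must rule out the degenerate possibility $r_{i} = 1$ being handled differently from $r_{i} \geq 2$, and one must handle the outer/inner dichotomy: a priori $P_{1}$ could be outer and $P_{2}$ inner. To deal with this I would compare the degrees $\deg \pi_{P_{1}} \in \{d, d-1\}$ and $\deg \pi_{P_{2}} \in \{d, d-1\}$ against $\deg(f_{P_{1}} \circ \pi_{P_{1}}) = \deg(f_{P_{2}} \circ \pi_{P_{2}})$: for $P$ outer the standard form gives $\deg \pi_{P} = d$ and $\deg f_{P} = n-1$ where $n$ is the number of distinct linear factors of $G$, while for $P$ inner $\deg \pi_{P} = d-1$ and $\deg f_{P} = n$; I would use the ramification behavior (Corollary~\ref{ramification} and Lemmas~\ref{Property of f_P}, \ref{Property of f_P inner}) to see that the fiber structures of $\pi_{\overline{P}}$ are incompatible across the two cases unless both points are of the same type, at which point the pencil-of-lines argument closes everything. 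Alternatively — and this is probably the slicker finish — I would simply note that both $P_{1}$ and $P_{2}$ can be recovered purely from $\overline{P}$ and $C^{*}$, since by the cited \cite[Lemma 2.4]{fukasawa-miura} the normalization sending $\overline{P}$ to $(1:0:0)$ is essentially unique up to transformations fixing $(1:0:0)$ and the standard form of $C$ (hence of $P = (1:0:0)$ in $\mathbb{P}^{2}$) is determined, so that $P_{1}$ and $P_{2}$ coincide with the same point $(1:0:0)$ in that normalized frame, giving $P_{1} = P_{2}$ after undoing the (common) linear transformation.
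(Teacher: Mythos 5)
Your proposal reduces the statement to the claim that $f_{P_1}\circ\pi_{P_1}=f_{P_2}\circ\pi_{P_2}$ forces $P_1=P_2$, and the step you yourself flag as ``the main obstacle'' is in fact a genuine gap that the proposal does not close. A general fiber of $f_{P_i}\circ\pi_{P_i}$ is a finite subset of $C$, not a union of lines; equality of the two fibers only tells you that one and the same finite set admits two partitions into collinear subsets, one by lines through $P_1$ and one by lines through $P_2$. Passing from that to ``the two unions of lines coincide'' (and hence to $P_1=P_2$) is precisely what needs an argument: you would have to show, e.g.\ by counting using $|F|=r_i\deg\pi_{P_i}$ together with the fact that $F\supseteq C\cap\overline{P_iQ}\setminus\{P_i\}$ for every $Q\in F$ and both $i$, that distinct base points are impossible; none of this is carried out, and the case $r_i=1$ (where a single line can pass through both candidate points) plus the outer/inner mismatch are only gestured at. The alternative ``slicker finish'' is circular: the normalization putting $C$ in standard form with $P=(1{:}0{:}0)$ is constructed \emph{from} $P$, and $\overline{P}=(1{:}0{:}0)$ is then computed from that choice; asserting that the normalization, hence $P$, is recoverable from $\overline{P}$ alone is essentially a restatement of the proposition.

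For contrast, the paper's proof avoids the geometry of fibers entirely and is purely group-theoretic: using Proposition \ref{Relation} it embeds both $G_{P_1}$ and $G_{P_2}$ into $G[\overline{P_1}]=G[\overline{P_2}]$. If the two images share a nontrivial element $\overline{\sigma}$, the fixed locus of the corresponding linear automorphism $\sigma$ (which is $\{X=0\}\cup\{P_i\}$ in standard form) pins down $P_1=P_2$. If they intersect trivially, the subgroup they generate has order at least $(d-1)^2$, which combined with $|G[\overline{P_1}]|\le\deg\pi_{\overline{P_1}}\le d(d-1)$ forces $\overline{P_1}$ to be Galois, contradicting Theorems \ref{outer}(3) and \ref{inner}(3). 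If you want to salvage your approach, you would need to supply the missing combinatorial lemma about a finite set doubly partitioned by two pencils of lines; otherwise the group-theoretic route is both shorter and complete.
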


\begin{proof}
Assume that $\overline{P_1}=\overline{P_2}$. 
Let $\overline{G_{P_i}}$ be the image of $G_{P_i} \hookrightarrow G[\overline{P_i}]$ for $i=1,2$. 
We consider the intersection $\overline{G_{P_1}} \cap \overline{G_{P_2}}$ in $G[\overline{P_1}]=G[\overline{P_2}]$. 
If there exists $\overline{\sigma} \in \overline{G_{P_1}} \cap \overline{G_{P_2}}\setminus \{1\}$, then we have $P_1=P_2$ by considering the fixed locus of $\sigma$. 
Assume $\overline{G_{P_1}} \cap \overline{G_{P_2}}=\{1\}$. 
Let $\sigma, \tau$ be generators of $G_{P_1}$, $G_{P_2}$ respectively. 
Since $|\langle \overline{\sigma}, \overline{\tau} \rangle| \ge (d-1)^2$ and $|G[\overline{P_1}]| \le d(d-1)$, $\langle \overline{\sigma}, \overline{\tau} \rangle=G[\overline{P_1}]$ and $\overline{P_1}$ is Galois. 
By Theorem \ref{outer}(3) and \ref{inner}(3), we have $|G[P_1]| \le d$. 
This is a contradiction. 
\end{proof}

For smooth curves, we have the following proposition which relies also on \cite{miura-yoshihara1, yoshihara1}. 
\begin{proposition} \label{smooth} 
If $C$ is smooth and $P$ is extendable Galois, then $d^*=d(d-1)$ and $\overline{P} \in \mathbb P^{2*} \setminus C^*$. 
Furthermore, we have the following for smooth curves with two or more Galois points. 
\begin{itemize}
\item[(1)] If $C$ is defined by $X^3Z+Y^4+Z^4=0$, then there exist a point $\overline{P} \in \mathbb P^{2*} \setminus C^*$ with $|G_{\overline{P}}| \le 6 \times 4^3$ and four points $\overline{P} \in \mathbb P^{2*} \setminus C^*$ with $|G_{\overline{P}}| \le 24 \times 3^4$. 
\item[(2)] If $C$ is defined by $X^{d-1}Z+Y^d+Z^d=0$ ($d \ge 5$), then there exists a point $\overline{P} \in \mathbb P^{2*} \setminus C^*$ with $|G_{\overline{P}}| \le (d-1)! \times d^{(d-1)}$ and a point $\overline{P} \in \mathbb P^{2*} \setminus C^*$ with $|G_{\overline{P}}| \le d! \times (d-1)^d$. 
\item[(3)] If $C$ is the Fermat curve $F_d$, then there exist three points $\overline{P} \in \mathbb P^{2*} \setminus F_d^*$ with $|G_{\overline{P}}| \le (d-1) \times d^{(d-1)}$.  
\end{itemize}
\end{proposition}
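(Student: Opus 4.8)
The plan is to prove each of the three parts of Proposition \ref{smooth} by first establishing the opening claim ($d^* = d(d-1)$ and $\overline{P} \notin C^*$) and then treating the listed curves via Theorems \ref{outer} and \ref{inner} directly.

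\medskip

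\emph{The opening claim.} First I would recall that for a smooth plane curve of degree $d \ge 3$ in characteristic zero, the dual map $\gamma_C$ is birational onto $C^*$ and $\deg C^* = d(d-1)$; this is classical (one counts $\deg C^* = d^* = $ the number of tangent lines through a general point, which by the Pl\"ucker formula for a smooth curve is $d(d-1)$, see \cite{namba}). Next, to see $\overline{P} \in \mathbb{P}^{2*}\setminus C^*$, I would use the standard forms: when $P$ is outer, $C: X^d + G(Y,Z) = 0$ with $G$ having $d$ distinct linear factors (smoothness forces this), $\overline{P} = (1:0:0)$, and one checks $(1:0:0)$ does not lie on $C^*$ by noting that a point of $C^*$ corresponds to a tangent line of $C$, and the line $U = 0$ in $\mathbb{P}^2$ coordinates is $\{X = 0\}$, which meets $C$ transversally in $d$ points (since $G$ has distinct roots), hence is not tangent. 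The inner case is analogous with $C: X^{d-1}Z + G(Y,Z) = 0$ and the line $\{Z=0\} = T_PC$, where one uses that $\overline{P} = (1:0:0) \in \mathbb{P}^{2*}$ parameterizes a line not among the tangent lines of $C$. Here I would lean on \cite{miura-yoshihara1, yoshihara1} for the precise geometry of extendable Galois points on smooth curves.

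\medskip

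\emph{Parts (1)--(3).} These are now direct applications. For (1), the curve $X^3Z + Y^4 + Z^4 = 0$ has $d = 4$; the point $P = (1:0:0)$ is an inner extendable Galois point, and one computes $f_P = (G_Y Z : -G + G_Z Z)$ with $G = Y^4 + Z^4$, getting $f_P = (4Y^3 Z : -Y^4 + 3Z^4)$ after simplification, so $r = \deg f_P = 4 = n$ (four distinct points $Q_i$) — wait, that gives the bound $|G_{\overline{P}}| \le d! \times (d-1)^d = 24 \times 3^4$ via Theorem \ref{inner}(2), which is one of the asserted points. The other point comes from viewing the same curve with a different (outer) extendable Galois point $P'$: the curve also carries an outer Galois point (this is one of the special curves classified with multiple Galois points), for which $d = 4$, $f_{P'}$ has degree $r = d - 1 = 3$, and Theorem \ref{outer}(2) gives $|G_{\overline{P'}}| \le R \times d^r \le 3! \times 4^3 = 6 \times 4^3$. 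For (2), the curve $X^{d-1}Z + Y^d + Z^d = 0$ ($d \ge 5$) similarly has both an inner Galois point $P$ (giving $|G_{\overline{P}}| \le d! \times (d-1)^d$ by Theorem \ref{inner}(2) with the generic upper bound) and an outer Galois point $P'$ (giving $|G_{\overline{P'}}| \le (d-1)! \times d^{d-1}$ by Theorem \ref{outer}(2)). For (3), the Fermat curve $F_d$ has three outer Galois points $(1:0:0)$, $(0:1:0)$, $(0:0:1)$ by symmetry; for each, $f_P$ is of the shape $(Y^{d-1}:Z^{d-1})$ (up to permuting coordinates), which is itself Galois with $r = R = d-1$, so by Remark \ref{f_P Galois} we get $|G_{\overline{P}}| \le r \times d^r = (d-1) \times d^{d-1}$ for all three.

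\medskip

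\emph{Main obstacle.} The routine part is plugging into Theorems \ref{outer} and \ref{inner}; the genuinely load-bearing inputs are (a) the classification of which smooth curves carry two or more extendable Galois points and where those points sit — this is exactly where \cite{miura-yoshihara1, yoshihara1} enter, and I would cite them for the statement that the curves $X^{d-1}Z + Y^d + Z^d = 0$ and the Fermat curve are (up to projective equivalence) the relevant examples, together with the location of their Galois points — and (b) verifying $\overline{P}\notin C^*$, i.e. that the distinguished line associated to $\overline{P}$ is genuinely non-tangent to $C$, which for the inner case requires care since $T_PC$ passes through $P$ and one must check it is not in the image of $\gamma_C$ (equivalently, $P$ is not a flex of an unexpected type). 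I expect (b), carried out uniformly for all standard forms, to be the one point needing a short argument rather than a citation; everything else is bookkeeping with the degree bounds already proved.
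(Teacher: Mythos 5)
Your treatment of parts (1)--(3) is essentially the paper's: identify the inner Galois point $(1{:}0{:}0)$ and the outer Galois point $(0{:}1{:}0)$ of $X^{d-1}Z+Y^d+Z^d=0$, compute $r=n$ (inner) resp.\ $r=n-1$ (outer), and feed these into Theorems \ref{outer}(2) and \ref{inner}(2), using $f_P=(Y^{d-1}{:}Z^{d-1})$ with $r=R=d-1$ for the Fermat curve. Where you genuinely diverge is the opening claim $\overline{P}\in\mathbb P^{2*}\setminus C^*$: the paper gets it by a degree count --- since $\gamma_C$ is birational, $\deg\pi_{\overline{P}}=\deg(f_P\circ\pi_P)=r\cdot\deg\pi_P=d(d-1)=\deg C^*$, which forces $\mathrm{mult}_{\overline{P}}(C^*)=0$ --- whereas you check directly that the line dual to $\overline{P}$ is nowhere tangent to $C$. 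Both routes work; the paper's is shorter and reuses Lemmas \ref{Property of f_P}(2) and \ref{Property of f_P inner}(2), while yours is more geometric. However, your execution of the inner case is garbled: since $\overline{P}=(1{:}0{:}0)$ in \emph{both} standard forms, the line to test is $\{X=0\}$ in both cases (smoothness forces $G$ to have $d$ distinct linear factors, so $\{X=0\}$ meets $C$ transversally and is not a tangent line); your invocation of $\{Z=0\}=T_PC$ and the worry about whether $T_PC$ lies in the image of $\gamma_C$ is a red herring --- $T_PC$ corresponds to $(0{:}0{:}1)$, not to $\overline{P}$. Finally, to land the stated \emph{counts} (four points in (1), three in (3)) you should make explicit that the relevant curves carry that many extendable Galois points (Miura--Yoshihara's classification, which you do cite) and that distinct Galois points $P$ produce distinct $\overline{P}$; the latter is exactly the proposition the paper proves immediately before this one.
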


\begin{proof}
Since $C$ is smooth, we have $\deg C^*=d(d-1)$ and $n=d$. 
It follows from Lemmas \ref{Property of f_P}(2) and \ref{Property of f_P inner}(2) that $r=d-1$ and $r=d$ if $P \in \mathbb P^2 \setminus C$ and if $P \in C$ respectively. 
Then the degree of $f_P\circ\pi_P$ is $d(d-1)$. 
Therefore, by Theorems \ref{outer}(1) and \ref{inner}(1), the degree of $\pi_{\overline{P}}$ is $d(d-1)$ and $\overline{P} \in \mathbb P^{2*} \setminus C^*$. 

For the Fermat curve $F_d$ and the outer Galois point $P=(1:0:0)$, $f_P=(Y^{d-1}:Z^{d-1})$ and $r=R=d-1$. 
\end{proof}

To prove Theorem \ref{cubic}, we would like to determine lines $\ell \subset \mathbb{P}^{2*}$ such that $\ell \cap {\rm Sing}(C^*)\ne \emptyset$ for a cubic curve $C$ which does not have a cusp.
Let ${\rm Flex}(C) \subset C$ be the set of all flexes of $C$. 
Then $\gamma_C({\rm Flex}(C))={\rm Sing}(C^*)$ as $C$ does not admit bitangent lines. 
The conditions appearing below will represent types of ramification indices for the projection $\hat{\pi}_R$ from a point $R \in \mathbb P^{2*}$. 

\begin{lemma} \label{nodal}
Let $C$ be a nodal curve of degree three and let $\ell \subset \mathbb{P}^{2*}$ be a line with $\ell \cap {\rm Sing}(C^*)\ne \emptyset$. 
Then, one of the following conditions holds. 
\begin{itemize}
\item[$(1)$] $C^* \cap \ell$ consists of two singular points.  
\item[$(2)$] $C^* \cap \ell$ consists of one singular point and two smooth points. 
\item[$(3)$] $C^* \cap \ell$ consists of exactly one singular point $Q_1$ and one smooth point $Q_2$ with $Q_1 \not\in T_{Q_2}C^*$. 
\end{itemize} 
On the other hand, there exists a unique line $\ell \subset \mathbb P^{2*}$ such that $C^* \cap \ell$ consists of exactly two points at which the tangent lines are $\ell$. 
\end{lemma}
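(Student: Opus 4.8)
The plan is to combine a B\'ezout-style intersection count on $C^*$ with biduality. First I would record the two structural facts I need: since $C$ is a nodal cubic without cusps we have $\deg C^*=4$, and (as already noted before the statement) $\mathrm{Sing}(C^*)=\gamma_C(\mathrm{Flex}(C))$ consists of ordinary cusps. These arise as the duals of the ordinary flexes of $C$, and $C^*$ has no nodes because $C$ has no bitangents. At an ordinary cusp the curve has multiplicity two, so a line through a cusp meets $C^*$ there with local intersection multiplicity exactly $2$, or exactly $3$ when the line is the cuspidal tangent.

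Now let $\ell$ pass through at least one singular point of $C^*$. By B\'ezout the total intersection multiplicity of $\ell$ with $C^*$ is $4$, and since each cusp contributes at least $2$, the line $\ell$ carries at most two cusps. I would then split into cases. If $\ell$ meets two cusps, each contributes exactly $2$ and nothing is left, giving condition $(1)$. If $\ell$ meets exactly one cusp $Q_1$ and is the cuspidal tangent there, then $I_{Q_1}(C^*,\ell)=3$ and the residual multiplicity $1$ forces a single transversal smooth point $Q_2$; transversality gives $\ell\ne T_{Q_2}C^*$, hence $Q_1\notin T_{Q_2}C^*$, which is condition $(3)$. If $\ell$ meets exactly one cusp $Q_1$ and is \emph{not} its cuspidal tangent, then $I_{Q_1}=2$ and the residual multiplicity $2$ is distributed over the smooth locus: either over two distinct transversal points, giving condition $(2)$, or at a single smooth point $Q_2$ with $\ell=T_{Q_2}C^*$.

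The one case to exclude is the last, and ruling it out is the heart of the argument. Here I would pass to $\mathbb{P}^2=\mathbb{P}^{2**}$ via biduality. Writing $R\in\mathbb{P}^2$ for the point dual to $\ell$, a point $Q\in C^*$ lies on $\ell$ exactly when the line of $\mathbb{P}^2$ corresponding to $Q$ passes through $R$; and for a smooth point $Q_2=\gamma_C(B)$ the tangent line $T_{Q_2}C^*$, read in $\mathbb{P}^{2**}=\mathbb{P}^2$, is precisely the point $B$. Thus $\ell=T_{Q_2}C^*$ forces $R=B\in C$, while $Q_1=\gamma_C(F)\in\ell$ (with $F$ a flex) forces $R\in T_FC$. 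Hence $B\in T_FC\cap C$. But the flex tangent of a cubic meets $C$ only at the flex (intersection multiplicity $3$ at $F$ already exhausts B\'ezout, and in the nodal case $T_FC$ cannot pass through the node, since that would force total multiplicity at least $5$), so $T_FC\cap C=\{F\}$ and therefore $B=F$, contradicting $Q_1\ne Q_2$. This excludes the bad case and establishes the trichotomy.

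For the final assertion I would produce the bitangent line of $C^*$ directly from the node $N$ of $C$. Let $\ell_0$ be the line of $\mathbb{P}^{2*}$ dual to $N$, and let $T_1,T_2$ be the two distinct nodal tangents, viewed as smooth points of $C^*$. By the same biduality computation $T_{T_i}C^*$ is the point $N$, so $\ell_0=T_{T_i}C^*$ for $i=1,2$; thus $\ell_0$ is tangent to $C^*$ at both $T_1$ and $T_2$, each with $I=2$, which accounts for all of B\'ezout, so $C^*\cap\ell_0=\{T_1,T_2\}$ with the required tangency at both points. Uniqueness follows because such a line is exactly a bitangent of $C^*$, dual to a node of $C$, and $C$ has a single node. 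I expect the main obstacle to be the biduality bookkeeping needed to exclude the tangent-at-a-smooth-point case; once the identification $T_{Q_2}C^*=B$ is in hand, everything else is a clean multiplicity count.
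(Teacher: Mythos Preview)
Your argument is correct and is organized differently from the paper's. The paper dualizes at the outset: writing $P\in\mathbb{P}^2$ for the point dual to $\ell$ and $Q\in\mathrm{Flex}(C)$ for a flex with $P\in T_QC$, it applies the Riemann--Hurwitz formula to $\hat{\pi}_P$ (degree $3$ if $P\neq Q$, degree $2$ if $P=Q$) to count how many further tangent lines of $C$ pass through $P$, obtaining the three patterns directly on $C$. Biduality is then invoked only once, to check the clause $Q_1\notin T_{Q_2}C^*$ in case~(3). You instead stay on $C^*$ and run a B\'ezout count, using that the three singular points are ordinary cusps (local intersection $2$, or $3$ along the cuspidal tangent); your case~(3) arises as the ``cuspidal tangent'' branch, where the transversality at $Q_2$ gives $Q_1\notin T_{Q_2}C^*$ for free, and biduality is used only to exclude the configuration ``one cusp with $I=2$ plus one smooth tangency.'' Both routes are short; yours avoids any discussion of the ramification contribution from the node (which the paper's Hurwitz count implicitly has to absorb into the ``two other points'' of its case~(2)), while the paper's route avoids needing the precise local intersection numbers at an ordinary cusp. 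For the bitangent, the paper simply invokes $C^{**}=C$ and the uniqueness of the node; your explicit description via the two nodal tangents is the same statement unpacked.
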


\begin{proof} 
Now $d^*=4$. 
We recall the well-known fact that there are three flexes for $C$. 
Let $Q \in {\rm Flex}(C)$ and let $P \in T_QC$, which is maybe $Q$. 
Using Hurwitz formula for the projection $\hat{\pi}_{P}$, the ramification divisor has degree $4$ (resp. $2$) if $P \ne Q$ (resp. if $P=Q$). 
Hence one of the following conditions holds. 
\begin{itemize}
\item[(1)] There exist exactly two flexes whose tangent lines contain $P$. 
\item[(2)] There exist exactly one flex and two other points whose three tangent lines contain $P$. 
\item[(3)] $P=Q$ and there exist a unique point $R$ which is not flex and the tangent line at the point contains $P$. 
\end{itemize} 
The former assertion is derived from an elementary argument of projective duality (see, for example, \cite[Lemma 2.1]{fukasawa-miura}). 
We consider (3). 
Now, $\gamma_C(P)=Q_1$. 
Let $\gamma_C(R)=Q_2$. 
Assume $Q_1 \in T_{Q_2}C^*$.  
Then $T_{Q_2}C^*=\overline{Q_1Q_2}=[P]$ and hence $R=\gamma_{C^*}(Q_2)=[T_{Q_2}C^*]=P$ (see \cite[Lemma 2.1]{fukasawa-miura} for the symbol $[*]$). 
This is a contradiction. 

Since $C^{**}=C$ has a unique node, the latter assertion holds.
\end{proof}

\begin{lemma} \label{smooth cubic}
Let $C \subset \mathbb P^2$ be a smooth plane curve of degree three and let $\ell \subset \mathbb{P}^{2*}$ be a line with $\ell \cap {\rm Sing}(C^*)\ne \emptyset$. 
Then, one of the following conditions holds. 
\begin{itemize}
\item[$(1)$] $C^* \cap \ell$ consists of three singular points.  
\item[$(2)$] $C^* \cap \ell$ consists of two singular points and two smooth points. 
\item[$(3$)] $C^* \cap \ell$ consists of one singular point and four smooth points. 
\item[$(4)$] $C^* \cap \ell$ consists of exactly one singular point and three smooth points.
\end{itemize} 
Furthermore, there exists a line $\ell$ with ($1$) if and only if $C$ is projectively equivalent to the Fermat curve. 
\end{lemma}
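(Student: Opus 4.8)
The plan is to mimic the proof of Lemma~\ref{nodal}: translate the statement about a line $\ell \subset \mathbb{P}^{2*}$ into a statement about the projection $\hat{\pi}_P$ from the point $P \in \mathbb{P}^2 = \mathbb{P}^{2**}$ dual to $\ell$ (so $\ell = \ell_P$, the pencil of lines through $P$), and then read off the list of cases from the Hurwitz formula. First I would record the duality dictionary. Since $C$ is smooth, $d^* = d(d-1) = 6$, the dual map $\gamma_C : C \to C^*$ is the normalization, and because $C$ has no bitangents $C^*$ has only cusps as singularities; hence $\gamma_C$ is bijective and restricts to a bijection $\mathrm{Flex}(C) \to \mathrm{Sing}(C^*)$ onto the set of nine cusps. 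By biduality ($C^{**}=C$), a point $Q \in C$ is a flex exactly when $\gamma_C(Q)$ is a cusp, and $T_{\gamma_C(Q)}C^* = \ell_Q$ (see \cite[Lemma~2.1]{fukasawa-miura}). Then $C^*\cap\ell_P = \gamma_C(\{Q \in C \mid P \in T_QC\})$, and for $Q \neq P$ such a $Q$ is precisely a ramification point of $\hat{\pi}_P$ (by Fact~\ref{index}, since $\overline{PQ}=T_QC$ there); moreover the local intersection number $I_{\gamma_C(Q)}(C^*,\ell)$ is $1$ if $\gamma_C(Q)$ is a smooth point (because then $\ell_P \neq \ell_Q = T_{\gamma_C(Q)}C^*$), is $2$ if $\gamma_C(Q)$ is a cusp with $P\neq Q$ (as $\ell_P$ is not the cuspidal tangent $\ell_Q$), and is $3$ if $\gamma_C(Q)$ is a cusp with $P=Q$.

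Next I would split into the cases $P \notin C$ and $P \in C$. If $P \notin C$, then $\hat{\pi}_P$ has degree $3$ and Hurwitz gives $\deg B = 6$ for its ramification divisor; a ramification point $Q$ has $e_Q = I_Q(C,T_QC) \in \{2,3\}$, with $e_Q = 3$ iff $Q$ is a flex. Writing $a$, $b$ for the numbers of ramification points with $e_Q = 2$ and $e_Q=3$, we get $a+2b = 6$, and $\ell\cap\mathrm{Sing}(C^*)\neq\emptyset$ forces $b\geq 1$, so $(a,b)\in\{(4,1),(2,2),(0,3)\}$, which give conditions (3), (2), (1); the multiplicity count $a\cdot 1 + b\cdot 2 = 6 = d^*$ confirms there are no further points. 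If $P \in C$, then $\hat{\pi}_P$ has degree $2$, so $\deg B = 4$ with only simple ramification, and $\gamma_C(P)\in C^*\cap\ell$ always. A Bézout argument (a flex tangent line $T_QC$ through a point $P\in C$ with $P\neq Q$ would meet $C$ with total multiplicity $\geq 3+1 > 3$) shows $\ell$ meets $\mathrm{Sing}(C^*)$ only when $P$ itself is a flex; in that case $\gamma_C(P)$ is the unique cusp on $\ell$, $e_P = 2$, the three remaining ramification points are non-flexes, and $\ell = \ell_P = T_{\gamma_C(P)}C^*$ is the cuspidal tangent, so $I_{\gamma_C(P)}(C^*,\ell) = 3$ and $C^*\cap\ell$ consists of this cusp and three smooth points — condition (4).

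For the ``furthermore'' clause, condition (1) says $\hat{\pi}_P$ (necessarily with $P\notin C$) is a degree-$3$ cover of $\mathbb{P}^1$ totally ramified over exactly three points and unramified elsewhere; its monodromy is generated by three $3$-cycles, hence lies in $\mathbb{Z}/3\mathbb{Z}$, and being transitive it equals $\mathbb{Z}/3\mathbb{Z}$, so $K(C)/\pi_P^*K(\mathbb{P}^1)$ is Galois and $P$ is an outer Galois point. By Proposition~\ref{Standard form}, $C$ is projectively $X^3+G_3(Y,Z)=0$ with $P=(1:0:0)$, and smoothness forces $G_3$ to have three distinct linear factors; after a coordinate change on $(Y,Z)$ normalizing these three points of $\mathbb{P}^1$ and a rescaling of $X$, the equation becomes $X^3+Y^3+Z^3=0$. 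Conversely, for the Fermat cubic the three flex tangents at the flexes on $\{X=0\}$ all pass through $(1:0:0)$, so $\ell_{(1:0:0)}$ satisfies (1).

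I expect the main obstacle to be setting up the duality dictionary cleanly — the identification of $C^*\cap\ell$ with the ramification of $\hat{\pi}_P$, the precise values of the local intersection numbers, and the cusp/flex correspondence — all of which rest on the projective duality lemma \cite[Lemma~2.1]{fukasawa-miura} and on $C^{**}=C$; once that is in place, the case analysis is pure Hurwitz bookkeeping. For the last clause the only non-formal step is the monodromy argument upgrading ``totally ramified over three points'' to ``Galois,'' after which Proposition~\ref{Standard form} finishes the classification.
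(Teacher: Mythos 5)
Your argument is correct, and it is the same duality-plus-Hurwitz strategy that the paper uses for the nodal case (Lemma \ref{nodal}); note that for this lemma the paper itself supplies no argument at all, deferring entirely to the proof of Theorem 1.2 in \cite{fukasawa-miura}, which proceeds in exactly this way. Your dictionary is set up correctly: for $P\notin C$ the Hurwitz count $a+2b=6$ with $b\ge 1$ gives the three types $(a,b)=(4,1),(2,2),(0,3)$, and the local intersection numbers ($1$ at a smooth point since $\ell_P\ne\ell_Q$, $2$ at a cusp whose cuspidal tangent $\ell_Q$ differs from $\ell_P$) sum to $d^*=6$ in each case, confirming cases (3), (2), (1); for $P\in C$ your B\'ezout argument correctly rules out inflectional tangents from other points, forces $P$ itself to be a flex, and produces case (4) with the cuspidal tangent contributing multiplicity $3$. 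The monodromy upgrade from ``totally ramified over three points'' to ``cyclic of order $3$, hence Galois'' is also fine.

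The one step you should justify is the appeal to Proposition \ref{Standard form}: it applies to \emph{extendable} Galois points, whereas your monodromy argument only shows that $\pi_P$ is Galois. For a smooth cubic this is easily repaired: the generator $\sigma$ of $G_P\cong\mathbb{Z}/3\mathbb{Z}$ permutes each fibre of $\pi_P$, and the fibres are cut out by the lines through $P$, so $\sigma^*\mathcal{O}_C(1)\cong\mathcal{O}_C(1)$; since the restriction map $H^0(\mathbb{P}^2,\mathcal{O}(1))\to H^0(C,\mathcal{O}_C(1))$ is an isomorphism of $3$-dimensional spaces, $\sigma$ extends to a linear automorphism of $\mathbb{P}^2$ preserving $C$, i.e.\ $P$ is extendable. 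With that sentence added, the passage to the standard form $X^3+G_3(Y,Z)=0$, the observation that smoothness forces $G_3$ to have three distinct roots, and the normalization to $X^3+Y^3+Z^3=0$ complete the ``furthermore'' clause; the converse computation for the Fermat cubic is correct as written.
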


\begin{proof}
See \cite[Proof of Theorem 1.2]{fukasawa-miura}
\end{proof} 

\begin{lemma} \label{Fermat cubic}
Let $F_3$ be the Fermat curve of degree $d=3$ and let $\ell_{U}, \ell_V, \ell_W \subset \mathbb P^{2*}$ be the lines given by $U=0, V=0, W=0$ respectively. 
Then, ${\rm Sing}(F_3^*)=F_3^* \cap (\ell_U \cup \ell_V \cup \ell_W)$ and sets $F_3^* \cap \ell_U$, $F_3^* \cap \ell_V$, $F_3^* \cap \ell_W$ consist of exactly three points. 
\end{lemma}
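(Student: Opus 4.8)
The plan is to compute the dual curve $F_3^*$ explicitly and then read off the intersections with the three coordinate lines directly. First I would recall that for $F_3: X^3+Y^3+Z^3=0$ the dual map is $\gamma_{F_3}(X:Y:Z)=(X^2:Y^2:Z^2)$ up to a constant factor. Since $F_3$ is smooth, $\gamma_{F_3}$ is a morphism, and by Proposition \ref{smooth} (or the standard Plücker formula) the dual curve has degree $d^* = d(d-1) = 6$. Composing the parametrization: a point $(U:V:W)$ lies on $F_3^*$ precisely when $(U:V:W)=(X^2:Y^2:Z^2)$ for some $(X:Y:Z)$ with $X^3+Y^3+Z^3=0$; eliminating, one obtains that $F_3^*$ is defined by the sextic $U^3+V^3+W^3-2(U^2V^2+V^2W^2+W^2U^2)+\cdots$, more conveniently written via $\sqrt{U}=\pm X$, etc. A cleaner route: $F_3^*$ is the image of $F_3$ under $(X:Y:Z)\mapsto(X^2:Y^2:Z^2)$, a degree-one map onto its image (since $\gamma_{F_3}$ is birational by \cite{kleiman}), so I can just work with the parametrization.

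Next I would locate ${\rm Sing}(F_3^*)$. By the remark preceding Lemma \ref{nodal}, ${\rm Sing}(F_3^*)=\gamma_{F_3}({\rm Flex}(F_3))$. The nine flexes of the Fermat cubic are the points where one coordinate vanishes: e.g. $X=0$ forces $Y^3+Z^3=0$, giving the three points $(0:1:-\omega^j)$ for $j=0,1,2$ where $\omega$ is a primitive cube root of unity, and similarly for $Y=0$ and $Z=0$ — nine flexes in total. Applying $\gamma_{F_3}(X:Y:Z)=(X^2:Y^2:Z^2)$: the three flexes on $\{X=0\}$ map to $(0:1:\omega^{2j})$, all lying on $\ell_U=\{U=0\}$; the three flexes on $\{Y=0\}$ map to points on $\ell_V$; the three on $\{Z=0\}$ map to points on $\ell_W$. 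Hence ${\rm Sing}(F_3^*)\subseteq \ell_U\cup\ell_V\cup\ell_W$, and I must check (i) that these nine image points are distinct (so ${\rm Sing}(F_3^*)$ really has nine points — consistent with a sextic of genus one having $\delta$-invariant $\binom{5}{2}-1=9$, all nodes), and (ii) that $F_3^*\cap\ell_U$, $F_3^*\cap\ell_V$, $F_3^*\cap\ell_W$ contain no further points. For (ii): $F_3^*\cap\ell_U$ consists of points $(0:V:W)$ with $(0:V:W)=(X^2:Y^2:Z^2)$, forcing $X=0$, hence $Y^3+Z^3=0$, hence exactly the three points found above (as a subset of a degree-$6$ curve met by a line, the intersection has total multiplicity $6$, and each of the three nodes accounts for multiplicity $2$, so there is no room for additional points). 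This simultaneously gives ${\rm Sing}(F_3^*)=F_3^*\cap(\ell_U\cup\ell_V\cup\ell_W)$ and that each $F_3^*\cap\ell_{(\cdot)}$ consists of exactly three points.

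The main obstacle I anticipate is the bookkeeping in verifying that the nine points $\gamma_{F_3}({\rm Flex}(F_3))$ are pairwise distinct and are exactly the nodes of $F_3^*$: one must confirm that distinct flexes have distinct tangent lines (true since $F_3$ has no bitangents — it is a smooth cubic) and that the squaring map does not accidentally identify two of the nine. Concretely, a point on $\ell_U\cap\ell_V$ would have to be $(0:0:1)$, which is not of the form $(X^2:Y^2:Z^2)$ with $X^3+Y^3+Z^3=0$ (that would need $X=Y=0$, impossible on $F_3$), so the three lines meet $F_3^*$ in disjoint triples; and within one line, the three values $\omega^{2j}$ are distinct. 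Once this is pinned down, the multiplicity count from Bézout ($1\cdot 6 = 3\cdot 2$) closes the argument with no room for error. I would also invoke Lemma \ref{smooth cubic} to note that each $\ell_U,\ell_V,\ell_W$ realizes case (1) of that lemma, which re-confirms "three singular points" on each line and is consistent with the Fermat curve being the unique cubic for which such lines exist.
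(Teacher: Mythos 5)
Your proof is correct. Note that the paper itself gives no argument for this lemma: it simply points to the proof of Theorem 1.2 in the earlier paper \cite{fukasawa-miura}. Your direct computation is therefore a genuinely self-contained alternative, and it is the natural one: $\gamma_{F_3}=(X^2:Y^2:Z^2)$, the nine flexes are $F_3\cap\{XYZ=0\}$, their images land three apiece on $\ell_U,\ell_V,\ell_W$ and are pairwise distinct, and since $\gamma_{F_3}$ is a morphism from a complete curve its image is all of $F_3^*$, so a point of $F_3^*\cap\ell_U$ must have $X^2=0$, i.e.\ come from a flex on $\{X=0\}$ --- which already settles both assertions without needing the B\'ezout count (which you correctly include only as a cross-check). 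One small factual slip in an aside: the nine singular points of $F_3^*$ are cusps, not nodes, since they are the images of (ordinary) flexes under the dual map; this does not affect your argument, because a cusp still has multiplicity two, so the estimate $I_P(F_3^*,\ell)\ge 2$ and the total $3\cdot 2=6=\deg F_3^*$ go through unchanged, and in any case your parametrization argument is already complete on its own. Compared with the paper's citation, your version has the advantage of being verifiable on the spot; the cost is only the explicit bookkeeping, which you carry out correctly.
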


\begin{proof}
See \cite[Proof of Theorem 1.2]{fukasawa-miura}. 
\end{proof}

\begin{proof}[Proof of Theorem \ref{cubic}]
We consider (1). 
Let $C$ be a cubic curve which does not have a cusp and let $R \in \mathbb P^{2*}$ be a Galois point. 
If $C$ is smooth, then there exist no Galois points for $C^*$ (\cite[Theorem 1.2]{fukasawa-miura}). 
Therefore, $C$ is a nodal cubic and $d^*=4$. 
Let ${\rm Sing}(C^*)=\{S_1, S_2, S_3\}$, which are cusps coming from three flexes of $C$. 
If $R \in \mathbb P^{2*} \setminus C^*$, then $R$ must lie on the line $\overline{S_iS_j}$ for some $i,j$ with $i \ne j$ by Fact \ref{Galois covering}(2) and Lemma \ref{nodal}. 
By Lemma \ref{nodal}, the line $\overline{S_kR}$ with $k \ne i, j$ contains an unramified smooth point. 
This is a contradiction. 
Therefore, $R \in C^*$. 
If $R \in C_{\rm sm}^*$, then $\hat{\pi}_R$ is a triple Galois covering. 
Therefore, $C^* \cap \overline{RS_i}=\{R, S_i\}$ for each $i$, by Fact \ref{Galois covering}(2). 
Since the degree of ramification divisor is $4$ by Hurwitz formula, this is a contradiction.  

We consider (2). 
Let $C$ be a nodal cubic. 
Assume that $C$ is defined by $X^2Z+G(Y, Z)=0$ and $P=(1:0:0)$. 
We prove that $G_{\overline{P}} \cong D_8$ in this case. 
If $n=1$, then $C$ is defined by $X^2Z+cY^3=0$ for some $c \in K$, which has a cusp. 
Therefore, $n=2$. 
It follows from Lemma \ref{Property of f_P inner}(2) that $r=2$. 
Then the degree of $f_P\circ\pi_P$ is $4$.  
Since $\deg C^*=4$, $\overline{P} \in \mathbb P^{2*} \setminus C^*$. 
By Theorem \ref{inner}(4), we have $|G_{\overline{P}}|=8$. 
Let ${\rm Gal}(L_{\overline{P}}/K(u,v))=\langle \tau \rangle$, where $\tau$ is of order two. 
Since $K(u, v)/K(v)$ is not Galois, ${\rm Gal}(L_{\overline{P}}/K(u,v)) \subset G_{\overline{P}}$ is not a normal subgroup. 
Therefore, there exists $\eta \in G_{\overline{P}}$ such that $\eta^{-1}\tau\eta \ne \tau$. 
Since $G_{\overline{P}}$ is not Abelian, there exists an element $\sigma$ of order four. 
Then $\tau \not\in \langle \sigma \rangle$ or $\eta^{-1}\tau\eta \not\in \langle\sigma \rangle$. 
Therefore, $G_{\overline{P}} \cong D_8$. 
Note that each flex has the standard form $X^2Z+G(Y, Z)=0$ up to a projective equivalence (cf. \cite{miura3}), i.e. which is extendable Galois.  
It is well-known that $C$ has three flexes.  
Therefore, we have three points in $\mathbb P^{2*} \setminus C^*$ whose Galois groups are the dihedral group of order eight. 

Let $R \in \mathbb P^{2*} \setminus C^*$ be a point with $G_R \cong D_8$. 
There exists a subgroup $H \subset G_{R}$ of order $4$ such that ${\rm Gal}(L_R/K(C^*)) \subset H$, and the intermediate field $L_R^H$ of $K(C^*)/\pi_R^*K(\mathbb P^1)$ satisfies $[K(C^*):L_R^H]=2$. 
Then, $\hat{\pi}_R$ is the composite map of two maps $\mathbb P^1 \rightarrow \mathbb P^1$ of degree two. 
Note that a map of degree two over $\mathbb P^1$ has at least two branch points. 
Considering the conditions as in Lemma \ref{nodal}, there are exactly two fibers of $\hat{\pi}_R$ contains exactly two ramification points. 
Let $T$ be the bitangent line as in Lemma \ref{nodal}. 
Then, $R$ is the intersection point $T \cap \overline{S_iS_j}$ for some $i, j$ with $i \ne j$. 
This implies that the number of points $R$ with $G_R \cong D_8$ is at most three. 

We consider (3). 
Let $C$ be smooth and let $P \in C$ be extendable Galois. 
By Proposition \ref{smooth}, $\overline{P} \in \mathbb P^{2*} \setminus C^*$. 
It follows from Lemma \ref{Property of f_P inner}(2) that $r=d=3$. 
By Theorem \ref{inner}(2), $6 \le |G_{\overline{P}}| \le 48$. 
Since $\overline{P}$ is not Galois by Theorem \ref{inner}(3) and $\pi_{\overline{P}}$ has degree $6$, we have $12 \le |G_{\overline{P}}|$. 
As in the proof of Theorem \ref{outer}(2), $L_{\overline{P}}$ is generated by square roots over the Galois closure of a triple covering. 
Therefore, $|G_{\overline{P}}|$ is given by $3 \times 2^i$ or $6 \times 2^i$, where $i=1, 2$ or $3$. 
We have $|G_{\overline{P}}|=12, 24$ or $48$. 
According to \cite[Corollary 2]{miura3}, we have nine extendable Galois points in $C$. 
We have assertion (3).

We consider (4). 
Let $R \in \mathbb P^{2*} \setminus C^*$ be a point with $G_R \cong (\mathbb Z/3\mathbb Z) \times S_3$. 
There exists a subgroup $H \subset G_R$ of order $9$ such that ${\rm Gal}(L_R/K(C^*)) \subset H$, and the intermediate field $L_R^H$ of $K(C^*)/\pi_R^*K(\mathbb P^1)$ satisfies $[K(C^*):L_R^H]=3$. 
Then, $\hat{\pi}_R$ is the composite map of maps of degree three and of degree two. 
Note the degree two map over $\mathbb P^1$ has at least two branch points. 
This implies that there are two lines $\ell \ni R$ with (1) in Lemma \ref{smooth cubic}.  
Then, $C$ is projectively equivalent to the Fermat curve. 
By Lemma \ref{Fermat cubic}, $R$ is the intersection point of two lines of the three lines $\ell_U, \ell_V, \ell_W$. 
This implies that the number of points $R$ with $G_R \cong (\mathbb Z/3\mathbb Z)\times S_3$ is at most three. 
By Proposition \ref{smooth} and Theorem \ref{group}, we have three points $R \in \mathbb P^{2*} \setminus F_3^*$ with $G_R \cong (\mathbb Z/3\mathbb Z) \times S_3$. 
\end{proof}


\begin{thebibliography}{99} 
\bibitem{fukasawa} S. Fukasawa, Galois points for a plane curve in arbitrary characteristic, Proceedings of the IV Iberoamerican conference on complex geometry, Geom. Dedicata {\bf 139} (2009), 211--218. 
\bibitem{fukasawa-miura} S. Fukasawa and K. Miura, Galois points for a plane curve and its dual curve, Rend. Sem. Mat. Univ. Padova {\bf 132} (2014), 61--74. 
\bibitem{kleiman} S. L. Kleiman, Tangency and duality, In: Proceedings of the 1984 Vancouver conference in algebraic geometry, CMS Conference Proceedings, {\bf 6}, Amer. Math. Soc., Providence, RI, 1986, pp.163--226. 
\bibitem{miura1} K. Miura, Field theory for function fields of plane quintic curves, Algebra Colloq. {\bf 9} (2002), 303--312. 
\bibitem{miura2} K. Miura, Galois points for plane curves and Cremona transformations, J. Algebra {\bf 320} (2008), 987--995. 
\bibitem{miura3} K. Miura, A note on birational transformations belonging to Galois points, Beitr. Algebra Geom. {\bf 54} (2013), 303--309. 
\bibitem{miura-yoshihara1} K. Miura and H. Yoshihara, Field theory for function fields of plane quartic curves, J. Algebra {\bf 226} (2000), 283--294. 
\bibitem{miura-yoshihara2} K. Miura and H. Yoshihara, Field theory for the function field of the quintic Fermat curve, Comm. Algebra {\bf 28} (2000), 1979--1988. 
\bibitem{namba} M. Namba, {\it Geometry of projective algebraic curves}, Marcel Dekker, Inc., New York, 1984. 
\bibitem{pirola-schlesinger} G. P. Pirola and E. Schlesinger, Monodromy of projective curves, J. Algebraic. Geom. {\bf 14} (2005), 623--642. 
\bibitem{stichtenoth} H. Stichtenoth, {\it Algebraic function fields and codes}, Universitext, Springer-Verlag, Berlin, 1993. 
\bibitem{suzuki} M. Suzuki, {\it Group Theory I}, Grundlehren der Mathematischen Wissenschaften {\bf 247}, Springer-Verlag, Berlin-New York, 1982.
\bibitem{takahashi} T. Takahashi, Projection of a nonsingular plane quintic curve and the dihedral group of order eight, Rend. Sem. Mat. Univ. Padova, to appear. 
\bibitem{yoshihara1} H. Yoshihara, Function field theory of plane curves by dual curves, J. Algebra {\bf 239} (2001), 340--355. 
\bibitem{yoshihara2} H. Yoshihara, Rational curve with Galois point and extendable Galois automorphism, J. Algebra {\bf 321} (2009), 1463--1472.
\end{thebibliography}
\end{document}